\numberwithin{equation}{section}
\newcommand{\lvt}{\left|\kern-1.35pt\left|\kern-1.3pt\left|}
\newcommand{\rvt}{\right|\kern-1.3pt\right|\kern-1.35pt\right|}
\newtheorem{thm}{Theorem}[section]
\newtheorem{cor}[thm]{Corollary}
\newtheorem{lem}[thm]{Lemma}
\theoremstyle{remark}
\newtheorem{rem}{Remark}[section]
 \def\la{{\langle}}
 \def\ra{{\rangle}}
 \def\a{{\alpha}}
 \def\b{{\beta}}
 \def\g{{\gamma}}
 \def\l{{\lambda}}
 \def\s{\sigma}
 \def\la{{\langle}}
 \def\ra{{\rangle}}
 \def\CD{{\mathcal D}}
 \def\CS{{\mathcal S}}
 \def\CV{{\mathcal V}}
 \def\NN{{\mathbb N}}
      \def\proj{\operatorname{proj}}
\def\lla{\langle{\kern-2.5pt}\langle}      
\def\rra{\rangle{\kern-2.5pt}\rangle}
\newcommand{\wh}{\widehat}
\def\f{\frac}
\begin{document}

\title{Best polynomial approximation on the triangle}

\author{Han Feng}
\address{Department of Mathematics\\
University of Oregon\\
Eugene, Oregon 97403-1222.}\email{hfeng3@uoregon.edu}

\author{Christian Krattenthaler}
\address{Fakult\"at f\"ur Mathematik\\
Universit\"at Wien\\
Oskar-Morgenstern-Platz 1\\
A--1090 Vienna, Austria. }  \email{christian.krattenthaler@univie.ac.at}

\author{Yuan Xu}
\address{Department of Mathematics\\ University of Oregon\\
    Eugene, Oregon 97403-1222.}\email{yuan@uoregon.edu}
  
\thanks{The second author was supported in part by 
the Austrian Science Foundation FWF, 
grant SFB F50 (Special Research Program
``Algorithmic and Enumerative Combinatorics".
The third author was supported in part by NSF Grant DMS-1510296.}

\date{\today}
\keywords{Best polynomial approximation, orthogonal expansion, triangle, K-functional}
\subjclass[2010]{33C50, 42C10}

\begin{abstract} 
Let $E_n(f)_{\a,\b,\g}$ denote the error of best approximation by polynomials of degree at most $n$ in the space 
$L^2(\varpi_{\a,\b,\g})$ on the triangle $\{(x,y): x, y \ge 0, x+y \le 1\}$, where $\varpi_{\a,\b,\g}(x,y) := x^\a y ^\b (1-x-y)^\g$ 
for $\a,\b,\g > -1$. Our main result gives a sharp estimate of $E_n(f)_{\a,\b,\g}$ in terms of the error of best approximation 
for higher order derivatives of $f$ in appropriate Sobolev spaces. The result also leads to a characterization of 
$E_n(f)_{\a,\b,\g}$ by a weighted $K$-functional. 
\end{abstract}

\maketitle

\section{Introduction}
We study best polynomial approximation on the weighted spaces on a triangle. We fix our triangle as 
$$
  \triangle := \{(x,y): x \ge 0, \, y \ge 0, \, x+y \le 1\}
$$ 
and define the weight function to be the Jacobi weight
$$
  \varpi_{\a,\b,\g}(x,y) := x^\a y ^\b (1-x-y)^\g, \quad \a,\b, \g > -1. 
$$
Let $\Pi_n^2$ denote the space of polynomials of degree at most $n$. For $1\le p < \infty$, the error of best approximation
by polynomials in $L^p(\varpi_{\a,\b,\g})$ is defined by
\begin{equation} \label{eq:best-p}
  E_n(f)_{L^p(\varpi_{\a,\b,\g})} : = \inf_{p \in \Pi_n^2} \|f - p\|_{L^p(\varpi_{\a,\b,\g})} ,
\end{equation}
and we replace $L^p(\varpi_{\a,\b,\g})$ by the space $C(\triangle)$ of continuous functions on $\triangle$ when $p =\infty$. 
For $p =2$ we simplify the notation and write
\begin{equation} \label{eq:best-p=2}
    E_n(f)_{\a,\b,\g}: =  E_n(f)_{L^2(\varpi_{\a,\b,\g})}.
\end{equation}

The characterization of best approximation by polynomials on the triangle and, more generally, on the 
$d$-dimensional simplex, has been studied by several authors; see \cite{DHW, DT, To, X05}. In the unweighted 
case $(\a = \b = \g =0$), the best approximation 
on the simplex is characterized by a modulus of smoothness and an equivalent $K$-functional in \cite{DT} and more 
recently in \cite{To} for all $p$ with $1 \le p \le \infty$, as a special case of a more general theorem in the setting of 
polytopes. In these articles, the modulus of smoothness and the $K$-functional are defined as the 
supremum, over all chords in the simplex, of appropriate 
moduli of smoothness or $K$-functionals of one variable over chords. The weighted case is much more difficult 
to characterize, and the complication can be already seen in the case of one variable 
(see \cite{DT}). Currently the only characterization in 
the weighted case is the one given in \cite{X05}, in which the $K$-functional on the triangle is defined by 
\begin{equation} \label{eq:K-func1}
  K_r(f;t)_{L^p(\varpi_{\a,\b,\g})}:= \inf_{g} \left \{ \|f-g\|_{L^p(\varpi_{\a,\b,\g})} + 
     t^r \|(-\CD_{\a,\b,\g})^{r/2} g\|_{L^p(\varpi_{\a,\b,\g})}\right \},
\end{equation}
where $\CD_{\a,\b,\g}$ is the second order differential operator 
\begin{multline} \label{eq:diff-op}
   \CD_{\a,\b,\g} : = 
  \left[\varpi_{\alpha,\beta,\gamma}(x,y)\right]^{-1}  \big [ \partial_1  \varpi_{\alpha+1,\beta,\gamma+1}(x,y)  \partial_1   
     + \partial_2   \varpi_{\alpha,\beta+1,\gamma+1}(x,y) \partial_2 \\
    +  \partial_3 \varpi_{\alpha+1,\beta+1,\gamma}(x,y) \partial_3 ],
\end{multline}
in which $\partial_1$ and $\partial_2$ are the first partial derivatives and we define
$$
  \partial_3 := \partial_2- \partial_1
$$
throughout this paper. In the unweighted case, a more informative modulus of smoothness and its equivalent 
$K$-functional on the simplex were defined in \cite{BX}, and the $K$-functional can be extended to the 
weighted setting by 
\begin{equation} \label{eq:K-func2}
  K^*_r(f;t)_{L^p(\varpi_{\a,\b,\g})}:= \inf_{g} \Big \{ \|f-g\|_{L^p(\varpi_{\a,\b,\g})} + 
      t^r \sum_{1 \le i \le 3} \| \phi_i^r \partial_i^r g \|_{L^p(\varpi_{\a,\b,\g})} \Big\},
\end{equation}
where the $\phi_i$'s are defined by
\begin{equation} \label{eq:phi-i}
  \phi_1(x,y) = \sqrt{x (1-x-y)}, \quad \phi_2(x,y) = \sqrt{y (1-x-y)}, \quad \phi_3(x,y) = \sqrt{x y}.
\end{equation}
For $r =2$, the two $K$-functionals in \eqref{eq:K-func1} and  \eqref{eq:K-func2} are comparable (cf.\ \cite{DHW}), but
the characterization of the best approximation via $K_r^*(f;t)_{L^p(\varpi_{\a,\b,\g})}$ is still open. 

Our study is motivated by the recent work in \cite{X17}, where simultaneous approximation by polynomials on the 
triangle is studied, and the main result involves the errors of best approximation for various derivatives of functions. 
This raises the question of bounding the error of best approximation for a function by those of its derivatives. 
Our main result (see Theorem~\ref{thm:main}) is the estimate
\begin{equation*} 
  E_n(f)_{\a,\b,\g} \le \frac{c}{ n^r} \big[ E_{n-r}(\partial_1^r f)_{\a+r,\b,\g+r} +E_{n-r}(\partial_2^r f)_{\a,\b+r,\g+r}
      +E_{n-r}(\partial_3^r f)_{\a+r,\b+r,\g} \big],
\end{equation*}
where $r$ is a positive integer and $c$ is a constant independent of $n$ and $f$. 
The indices on the right--hand side may look strange at first sight, but this turns out to be natural for at least two 
reasons. First, let $\CV_n(\varpi_{\a,\b,\g})$ be the space of orthogonal polynomials with respect to $\varpi_{\a,\b,\g}$ 
on the triangle; then we have
\begin{align*}
&\partial_1: \CV_n (\varpi_{\a,\b,\g}) \mapsto \CV_{n-1} (\varpi_{\a+1,\b,\g+1}), \\ 
&\partial_2:  \CV_n (\varpi_{\a,\b,\g}) \mapsto \CV_{n-1}(\varpi_{\a,\b+1,\g+1}), \\ 
&\partial_3:   \CV_n (\varpi_{\a,\b,\g}) \mapsto \CV_n(\varpi_{\a+1,\b+1,\g}),
\end{align*}
which shows that $\partial_i^r f$ on the right--hand side of the estimate are being approximated in the right spaces. 
Secondly, the estimate turns out to be what we need to establish the characterization of best approximation by 
$K^*_r(f;t)_{L^2(\varpi_{\a,\b,\g})}$ for all $r$.  

Our paper is organized as follows. Since we work in the $L^2$ setting, we need to deal with the Fourier orthogonal 
expansions on the triangle. This is developed in the next section. The main results on best polynomial  approximation
are presented and proved in the third section. The proof relies on a closed form formula for a family of determinants,
which is established in the fourth section. 

\section{Fourier orthogonal expansions on the triangle}
Since the polynomial of best approximation that attains $E_n(f)_{\a,\b,\g}$ is the $n$-th partial sum of the Fourier
orthogonal expansion with respect to $\varpi_{\a,\b,\g}$, we need to examine orthogonal structure on the triangle. 
For $\a,\b,\g  > -1$, we define an inner product by
$$
  \la f, g \ra_{\a,\b,\g} :=  c_{\a,\b,\g}\int_{\triangle} f(x,y) g(x,y) \varpi_{\a,\b,\g}(x,y) dxdy,
$$
where $c_{\a,\b,\g}$ is chosen so that $\la 1,1\ra_{\a,\b,\g} =1$; 
more precisely, we have
$$
   c_{\a,\b,\g} = \frac{\Gamma(\a+\b+\g+3)}{\Gamma(\a+1)\Gamma(\b+1)\Gamma(\g+1)}. 
$$
Let $\Pi_n^2$ be the space of polynomials of total degree at most $n$ in two variables and let $\CV_n(\varpi_{\a,\b,\g})$ 
be the subspace of orthogonal polynomials of degree $n$ with respect to this inner product. Then
$\Pi_n^d = \bigoplus_{k=0}^n \CV_k(\varpi_{\a,\b,\g})$. The polynomials in $\CV_n(\varpi_{\a,\b,\g})$ are eigenfunctions 
of the second order differential operator $\CD_{\a,\b,\g}$ defined in \eqref{eq:diff-op}; more precisely, 
we have
\begin{equation}  \label{eq:eigen}
 \CD_{\a,\b,\g} P = \l_n P \qquad \hbox{with}\quad \l_n:= - n(n + \a + \b + \g + 2)
\end{equation}
for all $P \in \CV_n(\varpi_{\a,\b,\g})$. An orthogonal basis of the space $\CV_n^{\a,\b,\g}$ can be given in terms of 
the Jacobi polynomials. Let $P^{(\a,\b)}_n$ be the usual Jacobi polynomial of degree $n$ on $[-1,1]$. We adopt 
the normalization 
\begin{equation*} 
J^{\alpha,\beta}_{n}(t) =  \frac{1 }{(n+\a+\beta+1)_{n}}{P}^{(\alpha,\beta)}_{n}(t),
\end{equation*}
where $(a)_n = a(a+1)\cdots (a+n-1)$ is the Pochhammer symbol. By the derivative formula 
for $P_n^{(\a,\b)}$ (cf.\ \cite[(4.5.5)]{Szego}), we then have  $\frac{d}{dt} J^{\a,\b}_n(2t-1)=J_{n-1}^{\a+1,\b+1} (2t-1)$. 
An orthogonal basis of $\CV_n(\varpi_{\a,\b,\g})$ is given by (cf.\ \cite[Section~2.4]{DX}) 
\begin{equation*}
 J_{k,n}^{\a,\b,\g} (x,y) := (x+y)^k J_k^{\a,\b}\left( \frac{y-x}{x+y}\right) J_{n-k}^{2k+\a+\b+1,\g}(1-2x-2y),
    \quad 0 \le k \le n. 
\end{equation*}
More precisely, we have
\begin{equation*} 
 \la J_{k,n}^{\a,\b,\g},  J_{j,m}^{\a,\b,\g} \ra_{\a,\b,\g} = h_{k,n}^{\a,\b,\g} \delta_{j,k}\delta_{n,m}, 
\end{equation*}
where 
\begin{multline}  \label{eq:hkn}
 h_{k,n}^{\a,\b,\g} =  \frac{(\alpha+1)_k\, (\beta+1)_k\, (\a+\b+1)_k\, (\g+1)_{n-k}} 
   {k!\,(n-k)!\, (\a+\b+1)_{2k}\,(\a+\b +2)_{2k} } \\
     \times  \frac{(\a+\b+2)_{n+k}\, (\a+\b+\g+2)_{n+k}}{(\a+\b+\g+2)_{2n}\, (\a+\b+\g+3)_{2n}}. 
\end{multline}
The derivatives of $J_{k,n}^{\a,\b,\g}$ satisfy the following relations (cf.\ \cite[(4.17)]{X17}):   
\begin{align}\label{eq:diffJ}
\begin{split}
  \partial_1  J_{k,n}^{\a,\b,\g}(x,y)  &= -  a_{k,n}^{\a,\b}  J_{k-1,n-1}^{\a+1,\b,\g+1}(x,y) 
    -   J_{k,n-1}^{\a+1,\b,\g+1}(x,y),  \\
  \partial_2  J_{k,n}^{\a,\b,\g}(x,y)  &= a_{k,n}^{\b,\a}  J_{k-1,n-1}^{\a,\b+1,\g+1}(x,y)  - 
          J_{k,n-1}^{\a,\b+1,\g+1}(x,y), \\
     \partial_3 J_{k,n}^{\a,\b,\g}(x,y)  &= J_{k-1,n-1}^{\a+1,\b+1,\g}(x,y)
\end{split}
\end{align}
for $0 \le k \le n$, where 
\begin{equation} \label{eq:akn-bkn}
  a_{k,n}^{\a,\b} :=  \frac{(k+\b)(n+k+\a+\b+1)}{(2k+\a+\b)(2k+\a+\b+1)}.
\end{equation}
The space $\CV_n(\varpi_{\a,\b,\g})$ has several other bases that can be explicitly given. For example,
simultaneous permutation of $\a,\b,\g$ and of $x,y,1-x-y$ in $J_{k,n}^{\a,\b,\g}$ leads to two different orthogonal 
bases of $\CV_n(\varpi_{\a,\b,\g})$. 

The Fourier orthogonal expansion of $f\in L^2(\varpi_{\a,\b,\g})$ is given by
$$
 f =\sum_{m=0}^\infty \sum_{k=0}^{m} \wh{f}_{k,m}^{\a,\b,\g} J^{\a,\b,\g}_{k,m}, \quad \hbox{where} \quad 
      \wh f_{k,m}^{\a,\b,\g}:= \frac{\la f,J^{\a,\b,\g}_{k,m}\ra_{\a,\b,\g}}{h^{\a,\b,\g}_{k,m}}. 
$$
The projection operator $\proj_n^{\a,\b,\g}: L^2(\varpi_{\a,\b,\g}) \mapsto \CV_n(\varpi_{\a,\b,\g})$ and the $n$-th 
partial sum operator $S_n^{\a,\b,\g}: L^2(\varpi_{\a,\b,\g}) \mapsto \Pi_n^2$
are defined by 
$$
 \proj_n^{\a,\b,\g}f:= \sum_{k=0}^{m} \wh{f}_{k,m}^{\a,\b,\g} J^{\a,\b,\g}_{k,m} \qquad \hbox{and}\qquad
   S_n^{\a,\b,\g}f:=\sum_{m=0}^n \proj_m^{\a,\b,\g}f. 
$$
Standard Hilbert space theory shows that the $n$-th partial sum $S_n^{\a,\b,\g} f$ is the least square 
polynomial of degree at most $n$ in $L^2(\varpi_{\a,\b,\g})$; that is, 
\begin{equation} \label{eq:BestApp}
E_n(f)_{\a,\b,\g}:=\inf_{p\in \Pi_n^2} \|f-p\|_{\a,\b,\g}=\|f-S^{\a,\b,\g}_n f\|_{\a,\b,\g},
\end{equation}
where, and throughout the rest of this paper, $\|\cdot\|_{\a,\b,\g} = \|\cdot\|_{L^2(\varpi_{\a,\b,\g})}$. 
As the orthogonal projection from $L^2(\varpi_{\a,\b,\g})$ to $\Pi_n^2$, the partial sum operators are independent 
of the choices of orthogonal bases. The derivatives act commutatively on the partial sum operators in the sense 
that (cf.\ \cite[(4.2.7)]{X17})
\begin{align} \label{eq:proj}
\begin{split}
  \partial_1^r \proj_n^{\a,\b,\g}f&= \proj_{n-r}^{\a+r,\b,\g+r}  \partial^r_1 f, \\
   \partial_2^r \proj_n^{\a,\b,\g}f&= \proj_{n-r}^{\a,\b+r,\g+r}  \partial_2^r f,\\
   \partial_3^r \proj_n^{\a,\b,\g}f&= \proj_{n-r}^{\a+r,\b+r,\g}  \partial_3^r f. 
\end{split}
\end{align}
These relations play an important role for our study. It implies, in particular, the following lemma. 

\begin{lem}
For $r=1,2,3,\ldots$ and $0 \le k \le n -r$, we have
\begin{align} \label{eq:wh-f}
\begin{split}
  (-1)^r \wh{\partial_1^r f}_{k,n-r}^{\a+r,\b,\g+r} & = \sum_{j=0}^r A_{r,j,k,n}^{\a,\b} \wh f^{\a,\b,\g}_{k+j,n},\\
  (-1)^r \wh{\partial_2^r f}_{k,n-r}^{\a,\b+r,\g+r} &= \sum_{j=0}^r (-1)^j A_{r,j,k,n}^{\b,\a} \wh f^{\a,\b,\g}_{k+j,n}, \\
   \wh{\partial_2^r f}_{k,n-r}^{\a+r,\b+r,\g} & = \wh f^{\a,\b,\g}_{k+ r,n},
\end{split}
\end{align}
where
$$
   A_{r,j,k,n}^{\a,\b}=\binom r j \f{(k+\b+1)_j\,(n+k+\a+\b+2)_j}{(2k+\a+\b+j+1)_j\,(2k+\a+\b+r+2)_j}. 
$$
\end{lem}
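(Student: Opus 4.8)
The plan is to iterate the first--order derivative relations in \eqref{eq:diffJ} and compare coefficients. Since the partial sum operators (hence the coefficient functionals $\wh{\cdot}$) are basis--independent and the formulas in \eqref{eq:proj} already tell us that $\partial_i^r$ maps the relevant orthogonal polynomial space into the right target space, the identities in \eqref{eq:wh-f} are equivalent to expansions of the form $\partial_i^r J_{k,n}^{\a,\b,\g} = \sum_j (\text{coefficient})\, J_{k+j-?,n-r}^{\cdots}$ in the appropriate basis. Concretely, for a fixed $f$ with Fourier expansion $f=\sum_{m,\ell}\wh f^{\a,\b,\g}_{\ell,m}J^{\a,\b,\g}_{\ell,m}$, one applies $\partial_i^r$ termwise (justified in $L^2$ by the mapping properties in \eqref{eq:proj}, or by first proving the identity for polynomials and passing to the limit), extracts the coefficient of a fixed basis element $J^{\cdots}_{k,n-r}$ on the right, and reads off the stated formula.

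The third identity is immediate: iterating $\partial_3 J_{k,n}^{\a,\b,\g}=J_{k-1,n-1}^{\a+1,\b+1,\g}$ gives $\partial_3^r J_{k,n}^{\a,\b,\g}=J_{k-r,n-r}^{\a+r,\b+r,\g}$, so the only term of $f$ whose $r$-fold $\partial_3$-derivative contributes to $J^{\a+r,\b+r,\g}_{k,n-r}$ is $J^{\a,\b,\g}_{k+r,n}$, with coefficient $1$; note the typo that the third line should read $\wh{\partial_3^r f}$, not $\wh{\partial_2^r f}$. For the first identity, I would set up the computation as follows. Write $\partial_1 J_{k,n}^{\a,\b,\g}=-a_{k,n}^{\a,\b}J^{\a+1,\b,\g+1}_{k-1,n-1}-J^{\a+1,\b,\g+1}_{k,n-1}$; iterating $r$ times produces a sum over lattice paths that decrease the first index by $0$ or $1$ at each step, so $\partial_1^r J^{\a,\b,\g}_{k,n}=\sum_{j=0}^r (-1)^r c_{r,j}\, J^{\a+r,\b,\g+r}_{k-j,n-r}$ for suitable nonnegative constants $c_{r,j}$ that are sums of products of the $a$-coefficients along paths. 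Hence the coefficient of $J^{\a+r,\b,\g+r}_{k,n-r}$ in $\partial_1^r f$ picks out the terms $J^{\a,\b,\g}_{k+j,n}$ of $f$ for $j=0,\dots,r$, and $(-1)^r\wh{\partial_1^r f}^{\a+r,\b,\g+r}_{k,n-r}=\sum_{j=0}^r c_{r,j}(k+j,n)\,\wh f^{\a,\b,\g}_{k+j,n}$, where $c_{r,j}(k+j,n)$ is the path sum starting from parameters $(k+j,n)$. It then remains to prove the closed form $c_{r,j}(k+j,n)=A_{r,j,k,n}^{\a,\b}=\binom rj\frac{(k+\b+1)_j(n+k+\a+\b+2)_j}{(2k+\a+\b+j+1)_j(2k+\a+\b+r+2)_j}$. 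This I would do by induction on $r$, splitting a path of length $r$ from $(k+j,n)$ according to whether the first step decreases the index (contributing the factor $-a_{\cdot}^{\cdot}$, after noting $a_{k+j,n}^{\a,\b}$ with shifted parameters) or not; the recurrence for $A$ then reduces to a Pochhammer identity. The second identity is proved identically, using $\partial_2 J^{\a,\b,\g}_{k,n}=a_{k,n}^{\b,\a}J^{\a,\b+1,\g+1}_{k-1,n-1}-J^{\a,\b+1,\g+1}_{k,n-1}$: the only change is the sign of the ``index--decreasing'' coefficient, which after $j$ such steps contributes the extra factor $(-1)^j$, and $\a,\b$ are swapped in the $a$-coefficient, giving $A_{r,j,k,n}^{\b,\a}$.

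The main obstacle is the explicit evaluation of the path sum $c_{r,j}$, i.e.\ verifying the proposed closed form for $A_{r,j,k,n}^{\a,\b}$. The recursion mixes the two parameters $k$ and $n$ and the coefficients $a^{\a,\b}_{\cdot,\cdot}$ are ratios of linear forms in $k,n,\a,\b$, so the induction step is a nontrivial hypergeometric--type identity rather than a one--line manipulation; checking that the binomial prefactor $\binom rj$ and the four Pochhammer symbols combine correctly under the split ``increment $r$, possibly increment $j$'' is where the real work lies. An alternative, possibly cleaner, route is to invert the relations \eqref{eq:diffJ} once for $r=1$ to get $\wh{\partial_1 f}$ in terms of two $\wh f$'s directly, and then iterate that $r$ times at the level of coefficient sequences; this turns the problem into composing lower--triangular ``two--band'' operators, and the entries of the $r$-fold composition are again the $A_{r,j,k,n}^{\a,\b}$, to be identified by the same inductive Pochhammer computation. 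Either way the combinatorial/hypergeometric identity for $A$ is the crux; everything else is bookkeeping with \eqref{eq:proj} and linearity of the Fourier expansion.
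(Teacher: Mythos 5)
Your proposal follows essentially the same route as the paper: the $r=1$ case is read off by applying \eqref{eq:diffJ} to $\proj_n^{\a,\b,\g}f$ and invoking \eqref{eq:proj}, and the general case is an induction on $r$ that produces a two-term recurrence for $A^{\a,\b}_{r,j,k,n}$ (you split on the first derivative step, the paper on the last), with the closed form then checked against the recurrence by a Pochhammer computation that the paper likewise dispatches as ``a straightforward computation.'' Your remarks that the third line should read $\wh{\partial_3^r f}$ and that the second identity only differs by the sign of the index-decreasing coefficient and the swap $\a\leftrightarrow\b$ are also correct.
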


\begin{proof}
By the first identity in \eqref{eq:diffJ}, we obtain
\begin{align*}
 \partial_1 \proj_n^{\a,\b,\g} f   & = 
   - \sum_{k=0}^n \wh{f}_{k,n}^{\a,\b,\g} \left(a_{k,n}^{\a,\b}  J_{k-1,n-1}^{\a+1,\b,\g+1}  
    + J_{k,n-1}^{\a+1,\b,\g+1}  \right) \\
 & = - \sum_{k=0}^{n-1} \left( \wh{f}_{k,n}^{\a,\b,\g}+ a_{k+1,n}^{\a,\b} \wh{f}_{k+1,n}^{\a,\b,\g}\right) J_{k,n-1}^{\a+1,\b,\g+1}.
\end{align*}
Hence, by the first identity in \eqref{eq:proj}, we conclude that 
$$
- \, \wh{\partial_1 f}^{\a+1,\b,\g+1}_{k,n-1}= \wh f^{\a,\b,\g}_{k,n} + a^{\a,\b}_{k,n}\wh f^{\a,\b,\g}_{k-1,n},
$$
which is the first identity in \eqref{eq:wh-f} with $r =1$, as $A_{1,0,k,n} = 1$ and $A_{1,1,k,n} = a^{\a,\b}_{k+1,n}$. 
The case $r > 1$ follows by induction. Indeed, assume that \eqref{eq:wh-f} holds up to the $r$th derivative. The above 
consideration with $\a, \g$ replaced by $\a+r, \g+r$, $n$ replaced by $n-r$, and $f$ replaced by $\partial_1^r f$ 
then shows that
\begin{align*}
     (-1)^{r+1} \wh{\partial_1^{r+1} f}^{\a+r+1,\b,\g+r+1}_{k,n-r-1} & = 
    (-1)^r \wh {\partial_1^r f}^{\a+r,\b,\g+r}_{k,n-r} + (-1)^r a^{\a+r,\b}_{k+1,n-r}\wh {\partial_1^r f}^{\a+r,\b,\g+r}_{k+1,n-r} \\
  & = \sum_{j=0}^r A_{r,j, k,n}^{\a,\b} \wh f_{k+j,n}^{\a,\b,\g}  + 
      a^{\a+r,\b}_{k+1,n-r}\sum_{j=0}^r A_{r,j, k+1,n}^{\a,\b} \wh f_{k+j+1,n}^{\a,\b,g},
\end{align*}
where the second identity follows from the induction hypothesis. By reordering the second 
sum, we see that the first identity in \eqref{eq:wh-f} holds for the $(r+1)$th derivative with coefficients given by 
$$
A_{r+1,j,k,n}^{\a,\b}= A_{r,j,k,n}^{\a,\b} + a_{k+1,n+r}^{\a+r,\b}A_{r,j-1,k+1,n}^{\a,\b},
$$
from which the explicit formula for $A_{r, j , k, n}^{\a,\b}$ follows by induction and a straightforward computation. 

The proof of the second identity in \eqref{eq:wh-f} is similar, and the third identity follows immediately 
from the third identity in \eqref{eq:diffJ} and the third identity in \eqref{eq:proj}. 
\end{proof}

For $n \ge k+ 2 r -1$, the first two equations in \eqref{eq:wh-f} lead to the system of linear equations 
\begin{equation}\label{M_k}
M_r(k,n) \left[  \begin{array}{c}
                     \wh f^{\a,\b,\g}_{k,n} \\
                     \wh f^{\a,\b,\g}_{k+1,n} \\
                     \vdots\\
                     \wh f^{\a,\b,\g}_{k+2r-1,n} 
                          \end{array} \right]
          =\left[\begin{array}{c}
                     \wh{\partial^r_1 f}^{\a+r,\b,\g+r}_{k,n-r} \\
                     \vdots \\
                     \wh{\partial^r_1 f}^{\a+r,\b,\g+r}_{k+r-1,n-r} \\
                     \wh{\partial^r_2 f}^{\a,\b+r,\g+r}_{k,n-r} \\
                     \vdots \\
                     \wh{\partial^r_2 f}^{\a,\b+r,\g+r}_{k+r-1,n-r}
                \end{array}\right],
\end{equation}
where $M_r(k,n)$ is the $2r \times 2r$ matrix defined by 
$$
M_r(k,n):=\begin{pmatrix} 
 A_{r,j-i, k+i, n}^{\a,\b} &\text{for }0\le i< r\\
 (-1)^{j - i + r} A_{r, j-i+r, k+i-r,n}^{\b,\a} &\text{for }r\le i< 2 r-1
\end{pmatrix}_{0\le i,j\le 2r -1}.
$$
The matrix $M_r(k,n)$ is invertible. In fact, its determinant has a closed form as seen in the following lemma. 

\begin{lem} \label{lem:2.2}
For $r \in \NN$, $n \ge k \ge 0$ and $\a,\b > -1$, 
\begin{equation}\label{eq:detMr}
   \det M_r(k,n) = (-1)^{r^2} \prod_{j=1}^r  \f{(n+k+\a+\b+j+1)_r}{(2k+\a+\b+2r+j)_r}.
\end{equation}
\end{lem}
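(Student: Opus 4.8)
The plan is to evaluate $\det M_r(k,n)$ by a combination of row/column operations that ``triangularize'' the structure coming from the binomial-type coefficients $A_{r,j,k,n}^{\a,\b}$, followed by an induction on $r$. First I would record the precise form of the entries: the top $r$ rows ($0\le i<r$) carry the coefficients $A_{r,j-i,k+i,n}^{\a,\b}$ (with the convention that $A_{r,m,\cdot,\cdot}=0$ for $m<0$ or $m>r$), so the $i$-th such row is a shifted copy of the vector $\bigl(A_{r,0,\cdot},A_{r,1,\cdot},\dots,A_{r,r,\cdot}\bigr)$ sitting in columns $i,i+1,\dots,i+r$; similarly the bottom $r$ rows are shifted copies of $\bigl((-1)^{0}A_{r,0}^{\b,\a},(-1)^{1}A_{r,1}^{\b,\a},\dots\bigr)$ in columns $i-r,\dots,i$. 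The key structural fact is that both ``stencils'' $(A_{r,j,k,n}^{\a,\b})_{j=0}^r$ and $((-1)^jA_{r,j,k,n}^{\b,\a})_{j=0}^r$ are, up to an explicit nonzero factor, the coefficient vectors of the operators $\partial_1^r$ and $\partial_2^r$ expressed in the $J_{k,n}$-basis, and these two operators, composed appropriately, should produce a telescoping that drops $\det M_r$ to $\det M_{r-1}$ times an explicit product.

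The concrete steps I would carry out: (1) Factor out of each row the obvious common scalars in $A_{r,\cdot,k+i,n}^{\a,\b}$ and $A_{r,\cdot,k+i-r,n}^{\b,\a}$ — namely the $\binom{r}{j}$ is absorbed by recognizing the rows as evaluations of $(1+t)^r$-type generating polynomials, and the Pochhammer denominators $(2k+\a+\b+j+\cdots)$ contribute a clean product. (2) Use the three-term recursion for the $A$'s, $A_{r+1,j,k,n}^{\a,\b}=A_{r,j,k,n}^{\a,\b}+a_{k+1,n+r}^{\a+r,\b}A_{r,j-1,k+1,n}^{\a,\b}$ (established in the previous lemma), to perform column operations that relate the $r$-th stencil to the $(r-1)$-st; this is exactly the mechanism that lets the size of the determinant shrink. (3) Set up the induction on $r$: the base case $r=1$ is the $2\times2$ determinant $\det\begin{pmatrix}A_{1,0,k,n}^{\a,\b}&A_{1,1,k,n}^{\a,\b}\\-A_{1,1,k,n}^{\b,\a}&A_{1,0,k,n}^{\b,\a}\end{pmatrix}=1\cdot1+a_{k+1,n}^{\a,\b}a_{k+1,n}^{\b,\a}$, which one checks directly equals $\tfrac{(n+k+\a+\b+2)_1}{(2k+\a+\b+3)_1}$ after using \eqref{eq:akn-bkn} and simplifying $(k+\b+1)(k+\a+1)+(n+k+\a+\b+2)(\cdots)$; the induction step reduces $M_r(k,n)$ to $M_{r-1}$ of a shifted argument times the new factor $\prod_j\frac{(n+k+\a+\b+j+1)}{(2k+\a+\b+2r+j-1+\cdots)}$ lining up with the telescoping product in \eqref{eq:detMr}. (4) Track the sign $(-1)^{r^2}$ through the permutation that reorders rows/columns after the block reductions — note $r^2\equiv r\pmod2$, so this is the parity of the interchange of the two $r$-blocks plus the $(-1)^{j-i+r}$ prefactors in the bottom half.

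Alternatively — and this is likely the cleaner route given the paper's fourth section is devoted to ``a closed form formula for a family of determinants'' — I would not try to reduce $M_r$ to $M_{r-1}$ directly, but instead recognize $\det M_r(k,n)$ as a special instance of a Cauchy-like or Vandermonde-like determinant whose entries are products of Pochhammer symbols, and evaluate it by the standard technique: pull out row and column factors until the matrix has entries that are themselves Pochhammer quotients of the form $\frac{1}{(x_i+y_j)}$ or $\prod(x_i-z_\ell)$, then apply a known determinant evaluation (Cauchy double-alternant or its $q=1$ Lagrange-interpolation cousin). The invertibility claim then falls out for free once the product formula is in hand, since each factor $(n+k+\a+\b+j+1)_r$ and $(2k+\a+\b+2r+j)_r$ is strictly positive for $\a,\b>-1$ and $n\ge k\ge0$.

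The main obstacle I anticipate is the bookkeeping in step (2)–(3): matching the shifted arguments of $A_{r-1,\cdot,\cdot,\cdot}$ produced by the column operations against the arguments appearing in $M_{r-1}(k,n)$ requires several simultaneous shifts ($k\mapsto k$ or $k+1$, $n\mapsto n$ or $n-1$, $r\mapsto r-1$), and the Pochhammer identities $(x)_{j}=(x)_{j-1}(x+j-1)$ etc.\ must be applied in precisely the right order to see the telescoping collapse to $\prod_{j=1}^r\frac{(n+k+\a+\b+j+1)_r}{(2k+\a+\b+2r+j)_r}$ rather than some reindexed variant. Getting the sign exponent to come out as $r^2$ (equivalently $r$ mod $2$) rather than, say, $\binom{r}{2}$ or $r^2+r$ will also need care with the block-swap permutation and the alternating signs in the definition of $M_r(k,n)$.
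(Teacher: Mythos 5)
There is a genuine gap: both of your proposed routes stop exactly where the difficulty begins, and the base case you do compute is wrong. For $r=1$ the bottom row of $M_1(k,n)$ is $\bigl(A_{1,0,k,n}^{\b,\a},\,-A_{1,1,k,n}^{\b,\a}\bigr)$, not $\bigl(-A_{1,1}^{\b,\a},\,A_{1,0}^{\b,\a}\bigr)$, so $\det M_1(k,n)=-\bigl(A_{1,1,k,n}^{\a,\b}+A_{1,1,k,n}^{\b,\a}\bigr)=-\f{n+k+\a+\b+2}{2k+\a+\b+3}$; your expression $1+a_{k+1,n}^{\a,\b}a_{k+1,n}^{\b,\a}$ is positive and does not equal this (note $(-1)^{r^2}=-1$ here). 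More seriously, the induction step in (2)--(3) does not close up dimensionally: the recursion $A_{r,j,k,n}^{\a,\b}=A_{r-1,j,k,n}^{\a,\b}+a^{\a+r-1,\b}_{k+1,n+r-1}A_{r-1,j-1,k+1,n}^{\a,\b}$ writes each of the $r$ top rows of $M_r$ (stencils of length $r+1$) as a combination of \emph{two} rows built from length-$r$ stencils with shifted parameters, so the top block factors as an $r\times(r+1)$ bidiagonal matrix times an $(r+1)\times 2r$ stencil matrix. This is a rank factorization, not a reduction of the $2r\times 2r$ determinant to a $2(r-1)\times 2(r-1)$ one, and no telescoping to $\det M_{r-1}$ falls out. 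Your alternative route also fails to match the matrix: $M_r(k,n)$ has a banded, Sylvester/resultant-like two-block structure, not the form $1/(x_i+y_j)$ of a Cauchy double alternant, and no standard single determinant evaluation applies to it directly.

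What is actually needed (and what the paper does) is heavier. One first strips the $n$-dependence by conjugating with explicit diagonal matrices, reducing to an $n$-free matrix $M_r(k)$. Then one performs a Laplace expansion of $\det M_r(k)$ along the first $r$ rows; each of the $\binom{2r}{r}$ pairs of complementary minors \emph{is} of Vandermonde type and is evaluated in closed form by a known determinant lemma (Lemma~7 of \cite{KratBN}), but the resulting object is a $\binom{2r}{r}$-term multiple sum of products of Pochhammer symbols. Evaluating that sum in closed form is the real content: it is a degenerate case of Warnaar's multidimensional ${}_{10}V_9$ summation, proved by Rosengren. Neither the row-operation induction nor the Cauchy-determinant recognition in your proposal supplies a substitute for this summation step, so the proof as sketched cannot be completed along the lines you describe.
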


\begin{proof}
The proof relies on Theorem~\ref{thm:1} proved in Section~\ref{sec:4}. Here we show how it can be deduced 
from Theorem~\ref{thm:1}. We define
$$
   A_{r,j, k}^{\a,\b} = \binom r j \f{(k+\b+1)_j}{(2k+\a+\b+j+1)_j\,(2k+\a+\b+r+2)_j}, 
$$
which is $ A_{r,j, k,n}^{\a,\b}$ without its factor that depends on $n$, and we define the matrix $M_r(k)$ by 
$$
M_r(k):=\begin{pmatrix} 
 A_{r,j-i, k+i}^{\a,\b} &\text{for }0\le i< r\\
 (-1)^{j - i + r} A_{r, j-i+r, k+i-r}^{\b,\a} &\text{for }r\le i< 2 r-1
\end{pmatrix}_{0\le i,j\le 2r -1}.
$$
Using the fact that 
$$
  (n+k + i+ \a + \b +2)_{j-i} = \frac{(n+k + \a + \b +2)_j}{(n+k + \a + \b +2)_i},
$$
it is not difficult to see that the factors containing $n$ in the matrix $M_r(k,n)$ can be factored out. More precisely, 
define two diagonal matrices by
$$ 
  R_r(k,n) =  \mathrm{diag}\left\{  (n+k + \a + \b +2)_j :  0 \le j \le 2 r-1\right\}
$$
and 
$$
  L_r(k,n) = \left[ \begin{matrix} \Lambda_r  & 0 \\ 0 & \Lambda_r \end{matrix} \right], \qquad 
\Lambda_r = \mathrm{diag}\left\{ \frac{1}{(n+k + \a + \b +2)_i}:  0 \le i \le r-1 \right\}.
$$
Then it is easy to verify that 
$$
   M_r (k) = L_r(k,n) M_r (k,n) R_r(k,n).
$$
Evaluating the determinants of $L_r(k,n)$ and $R_r(k,n)$, we see that \eqref{eq:detMr} reduces to 
$$
  \det M_r (k ) =  \frac{(-1)^{r^2}}{\prod_{j=1}^r(2k+\a+\b+2r+j)_r}.
$$
This last identity is a special case of Theorem~\ref{thm:1}, as can be seen by setting $s_1 = k+\a +1$, 
$s_2 = k + \b +1$, and $r_1 = r_2 = r$ in \eqref{eq:1}.
\end{proof}

Since the matrix $M_r(k,n)$ is invertible, the system of equations \eqref{M_k} can be solved to give an 
expression for $\wh {\partial^r_i f}^{\a+r,\b,\g+r}_{k,n-r}$ as a sum of $\wh f^{\a,\b,\g}_{k+j,n}$ over $j$, which
will be needed in the proof of our main result. 

\begin{lem} \label{lem:2.3}
Let $ r \in \NN$. For $n \ge k + 2r -1$, we have
$$
  \wh f_{k,n}^{\a,\b,\g} = \sum_{\ell=1}^{r} B_{\ell,1} (k,n) \wh{\partial_1^r f}^{\a+r,\b,\g+r}_{k+\ell-1, n-r}+
      \sum_{\ell=1}^{r} B_{\ell,2}(k,n) \wh{\partial_2^r f}^{\a,\b+r,\g+r}_{k+\ell-1,n-r}, 
$$
where the constants $B_{\ell,i}(k,n)$ satisfy 
$$
    \left | B_{\ell,i} (k,n) \right | \le c \left( \frac{n}{k+1} \right)^{\ell -1},  \quad 0 \le \ell \le r-1, \quad i =1,2,
$$ 
where $c$ is a constant independent of $n$ and $k$. 
\end{lem}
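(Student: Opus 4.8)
The plan is to solve the linear system \eqref{M_k} by Cramer's rule and to estimate the resulting quotients of determinants. Write the right--hand side of \eqref{M_k} as $(g_0,\dots,g_{2r-1})^{T}$, so that $g_i=\wh{\partial_1^r f}^{\a+r,\b,\g+r}_{k+i,n-r}$ for $0\le i\le r-1$ and $g_i=\wh{\partial_2^r f}^{\a,\b+r,\g+r}_{k+i-r,n-r}$ for $r\le i\le 2r-1$. By Cramer's rule, $\wh f^{\a,\b,\g}_{k,n}$ equals the determinant of the matrix obtained from $M_r(k,n)$ by replacing its $0$-th column with $(g_0,\dots,g_{2r-1})^{T}$, divided by $\det M_r(k,n)$; expanding that determinant along its $0$-th column produces precisely the asserted identity, with
$$
  B_{\ell,1}(k,n)=(-1)^{\ell-1}\,\frac{N_{\ell-1}(k,n)}{\det M_r(k,n)},\qquad
  B_{\ell,2}(k,n)=(-1)^{\ell+r-1}\,\frac{N_{\ell+r-1}(k,n)}{\det M_r(k,n)},
$$
where $N_i(k,n)$ denotes the $(2r-1)\times(2r-1)$ minor of $M_r(k,n)$ obtained by deleting its $i$-th row and its $0$-th column. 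It therefore suffices to bound each $N_i(k,n)$ against $\det M_r(k,n)$.

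For the denominator I invoke \lemref{lem:2.2}. Since $\a,\b>-1$ and $0\le k\le n-2r+1\le n$, every factor of the numerator $\prod_{j=1}^{r}(n+k+\a+\b+j+1)_r$ in \eqref{eq:detMr} is positive and lies between $n$ and a fixed multiple of $n$, while every factor of the denominator $\prod_{j=1}^{r}(2k+\a+\b+2r+j)_r$ lies between a fixed positive multiple of $k+1$ and a fixed multiple of $k+1$; throughout, ``fixed'' means depending only on $r,\a,\b$. Hence there is a constant $c_1>0$, depending only on $r,\a,\b$, with
$$
  \big|\det M_r(k,n)\big|\ge c_1\Big(\frac{n}{k+1}\Big)^{r^2}
$$
for all $k\ge 0$ and $n\ge k+2r-1$.

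For the numerator I estimate the entries of $M_r(k,n)$ one at a time. Assign to the $j$-th column the weight $w_j:=j$, and to the $i$-th row the weight $w_i:=i$ if $0\le i<r$ and $w_i:=i-r$ if $r\le i\le 2r-1$. From the explicit formula for $A^{\a,\b}_{r,j,k,n}$ (and from \eqref{eq:akn-bkn}), together with $\a,\b>-1$ and $0\le k\le n$, one checks that the $(i,j)$-entry of $M_r(k,n)$ vanishes unless $0\le w_j-w_i\le r$, in which case its absolute value is at most $C\,(n/(k+1))^{\,w_j-w_i}$ for a fixed constant $C$. (The only delicate point is to bound from below, by a fixed positive multiple of the appropriate power of $k+1$, the Pochhammer products occurring in the denominators of these entries; this is immediate for $k\ge1$ and amounts to finitely many positive constants for $k=0$, the positivity of all relevant factors being exactly where $\a,\b>-1$ enters.) Expanding $N_i(k,n)$ by the Leibniz formula, each of its at most $(2r-1)!$ non-vanishing terms has absolute value at most $C^{2r-1}(n/(k+1))^{D_i}$, where
$$
  D_i=\sum_{j=1}^{2r-1}w_j-\Big(\sum_{0\le i'\le 2r-1}w_{i'}-w_i\Big)=r(2r-1)-r(r-1)+w_i=r^2+w_i .
$$
For $B_{\ell,1}$ the deleted row is the $(\ell-1)$-th, of weight $\ell-1$; for $B_{\ell,2}$ it is the $(\ell+r-1)$-th, of weight $(\ell+r-1)-r=\ell-1$. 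In both cases $|N_i(k,n)|\le (2r-1)!\,C^{2r-1}(n/(k+1))^{r^2+\ell-1}$, and dividing by the lower bound for $|\det M_r(k,n)|$ yields $|B_{\ell,i}(k,n)|\le c\,(n/(k+1))^{\ell-1}$ with $c=c(r,\a,\b)$, as claimed.

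The step I expect to require the most care is the uniformity in $k$: the entrywise upper bounds on $M_r(k,n)$ and the lower bound on $|\det M_r(k,n)|$ must hold with constants independent of $k$ down to $k=0$, where the Pochhammer denominators are smallest. Everything else is the Cramer/Leibniz expansion together with the weight count above, which forces every surviving term of $N_i(k,n)$ to carry exactly the power $(n/(k+1))^{r^2+\ell-1}$. (Should one want explicit expressions rather than estimates, the minors $N_i(k,n)$ can presumably be evaluated in closed form by the same determinant techniques used for \thmref{thm:1}, but that is not needed here.)
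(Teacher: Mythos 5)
Your proposal is correct and follows essentially the same route as the paper: Cramer's rule, the closed form of $\det M_r(k,n)$ from Lemma~\ref{lem:2.2} to get the lower bound $(n/(k+1))^{r^2}$, entrywise bounds $\lesssim (n/(k+1))^{j-i}$ resp.\ $(n/(k+1))^{j-i+r}$ on the two blocks, and a Leibniz expansion of the minors in which your weight count $D_i=r^2+w_i$ is exactly the paper's observation that $\sum_i(\sigma(i)-i)=0$ forces every surviving term to carry the power $r^2+\ell-1$. Your explicit tracking of the cofactor signs and of uniformity at $k=0$ is slightly more careful than the paper's write-up but changes nothing of substance.
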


\begin{proof}
We only need to solve for the first element, $\wh f_{k,n}^{\a,\b,\g}$, in the linear system \eqref{M_k}.
For $1 \le \ell \le 2r$, let $M_r^{\ell,1}(k,n)$ be the matrix formed by eliminating the first column and
the $\ell$-th row from the matrix $M_r(k,n)$. By Cramer's rule, 
$$
B_{\ell,i}(k,n) = \frac{\det M_r^{\ell,1}(k,n)} {\det M_r(k,n)},
$$
where $1 \le \ell \le r$ for $i=1$ and $r+1 \le \ell \le 2 r$ for $i =2$.   
From the explicit expression for $M_r(k,n)$, it follows readily that 
\begin{equation} \label{eq:det-bound}
\left | \det M_r(k,n) \right | \sim \left ( \frac n {k+1} \right)^{r^2}, \qquad 0 \le k \le n,
\end{equation}
where $A \sim B$ means that there exist positive constant $c_1$ and $c_2$ such that $c_1 \le A/B \le c_2$. We now estimate
$|\det M_r^{\ell,1}(k,n)|$ from above. 

We first assume $1 \le \ell \le r$. Let $A_{i,j}$ denote the $(i,j)$-entry of a matrix $A$. The entries of $M_r(k,n)$ and
$M_r^{\ell,1}(k,n)$ are indexed by $i, j =1,\ldots, 2r$ and $2r-1$, respectively. Then 
$$
  M_r^{\ell,1}(k,n)_{i,j} =  \begin{cases} 
                             M_r(k,n)_{i, j+1}, & \hbox{$1 \leq i\leq  \ell-1$;} \\
                             M_r(k,n)_{i+1, j+1}, & \hbox{$\ell \leq i\leq 2r-1 $.}   \end{cases}
$$ 
From the explicit formula for $A^{\a,\b}_{r,j-i,k+i,n}$, it follows that 
$$
  M_r(k,n)_{i,j} = A^{\a,\b}_{r,j-i,k+i-1,n}\lesssim \left (\f n {k+1} \right)^{j-i}
$$
for $1\leq i\leq r$ and $i\leq j\leq i+r$, where $\lesssim$ means that the inequality holds up to a constant
independent of $k$ and $n$, and  
$$
\left | M_{r}(k,n)_{i,j} \right |= A^{\b,\a}_{r,j-i+r,k+i-r-1,n}\lesssim \left(\f n {k+1} \right)^{j-i+r}
$$
for $r+1 \leq i\leq 2r$ and $i-r\leq j\leq i$. Consequently,  we deduce that, for $i,j= 1, 2, \ldots, 2r-1$,
we have
\begin{equation} \label{eq:coeff-bound}
 \left | M_{r}^{\ell,1}(k,n)_{i,j} \right |\lesssim   \begin{cases}
                          (\f n {k+1})^{j+1-i}, & \hbox{$ 1 \leq i \le \ell-1$,}\\
                          (\f n {k+1})^{j-i}, & \hbox{$\ell \leq i\leq r-1$,} \\ 
                          (\f n {k+1})^{j-i+r}, & \hbox{$r \leq i\leq 2r-1$}. \end{cases}
\end{equation}

Now, by the definition of the determinant, 
\begin{equation*}
   \det M_r^{1,\ell}(k,n)=\sum_{ \s \in \CS_{2r-1}}\mathrm{sign}(\s) \prod_{i=1}^{2r-1}M^{1,\ell}_r(k,n)_{i,\s(i)},
\end{equation*}
where $\CS_{2r-1}$ is the set of all permutations of $\{1, 2,\ldots, 2r-1\}$. For each $\s\in S_{2r-1}$, we then
obtain, using \eqref{eq:coeff-bound},
\begin{multline*}
\left| \prod_{i=1}^{2r-1}M_r^{(1,\ell)}(k,n)_{i, \s(i)} \right |   \lesssim    
   \prod_{i=1}^{\ell-1}   \left(\f n {k+1} \right)^{\s(i)+1-i}
\times\prod_{i=\ell}^{r-1}  \left(\f n {k+1} \right)^{\s(i)-i} \\
      \qquad 
\times\prod_{i=r}^{2r-1}  \left(\f n {k+1} \right)^{\s(i)-i+r}  
        =  \left(\f n {k+1} \right)^{r^2+\ell-1},
\end{multline*}
where the last equation follows from $\sum_{i=1}^{2r-1}(\s(i)-i)=0$. Consequently, we conclude that 
$$
 \left| \det M_r^{1,\ell}(k,n)  \right | \lesssim  \left( \f n {k+1} \right )^{r^2+\ell-1}.
$$
Together with \eqref{eq:det-bound}, this establishes the desired estimate for $B_{\ell,1}(k,n)$. 

The estimate for $B_{\ell,2}(k,n)$ can be proved in the same way. Indeed, if $\ell$ satisfies $r +1 \le \ell \le 2r$, we 
may exchange the rows of $M_r^{\ell,1}(k,n)$ so that the last $r-1$ rows become the first $r-1$ rows, which does 
not change the value of the absolute value of the determinant. Furthermore, since our proof relies only on absolute
values of the entries, the signs $(-1)^{j-i+r}$ in the entries of $M_r(k,n)$ can be ignored. 
\end{proof}

\section{Best polynomial approximation in weighted space}
For $\a,\b,\g > -1$, let $W_2^r(\varpi_{\a,\b,\g})$ denote the Sobolev space defined by
$$
  W_2^r(\varpi_{\a,\b,\g}) = \{f \in L^2(\varpi_{\a,\b,\g}): \phi_i^r \partial_i^r f \in L^2(\varpi_{\a,\b,\g}), \, i =1, 2, 3\},
$$
where the $\phi_i$'s are defined in \eqref{eq:phi-i}. Since $\phi_i^r g \in L^2(\varpi_{\a,\b,\g})$ is equivalent with 
the assertion that $g \in 
L^2(\varpi_{\a+r,\b,\g+r})$ for $i =1$, $g \in L^2(\varpi_{\a,\b+r,\g+r})$ for $i =2$, and $g \in L^2(\varpi_{\a+r,\b+r,\g})$ 
for $i =3$, it follows from \eqref{eq:diffJ} that $J_{k,n}^{\a,\b,\g} \in W_2^r(\varpi_{\a,\b,\g})$. Our main result is the 
following theorem. 

\begin{thm} \label{thm:main}
Let $\a,\b,\g> -1$, and let $r$ be a positive integer. For $f \in W_2^r(\varpi_{\a,\b,\g})$, 
we have
\begin{equation*} 
  E_n(f)_{\a,\b,\g} \le \frac{c}{ n^r} \big[ E_{n-r}(\partial_1^r f)_{\a+r,\b,\g+r} +E_{n-r}(\partial_2^r f)_{\a,\b+r,\g+r}
      +E_{n-r}(\partial_3^r f)_{\a+r,\b+r,\g} \big]
\end{equation*}
for $n \ge 3 r$, where $c$ is a constant independent of $n$ and $f$. 
\end{thm}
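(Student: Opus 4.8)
The plan is to work entirely on the Fourier-coefficient side, using the Parseval identity
$$
E_n(f)_{\a,\b,\g}^2 = \|f - S_n^{\a,\b,\g}f\|_{\a,\b,\g}^2 = \sum_{m=n+1}^\infty \sum_{k=0}^m \bigl|\wh f_{k,m}^{\a,\b,\g}\bigr|^2 h_{k,m}^{\a,\b,\g},
$$
and similarly for the three derivatives on the right-hand side. So the goal is to bound each tail coefficient $\wh f_{k,m}^{\a,\b,\g}$ (weighted by $h_{k,m}^{\a,\b,\g}$) by a combination of the tail coefficients of $\partial_1^r f$, $\partial_2^r f$, $\partial_3^r f$ (weighted by the appropriate $h$'s), with a gain of a factor $n^{-r}$. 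For each fixed $m > n$ and each $k$, I would split according to whether $k$ is ``small'' or ``large'' relative to $m$. Concretely, fix a threshold and treat the range $0 \le k \le m - 2r + 1$ (where Lemma~\ref{lem:2.3} applies) using $\partial_1^r$ and $\partial_2^r$, and handle the top $2r-1$ values of $k$ (near $k = m$) via the third identity in \eqref{eq:wh-f}, namely $\wh{\partial_3^r f}^{\a+r,\b+r,\g}_{k,m-r} = \wh f^{\a,\b,\g}_{k+r,m}$, which directly relates the large-$k$ coefficients of $f$ to those of $\partial_3^r f$.

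The core estimate is the following: for $0 \le k \le m - 2r + 1$, Lemma~\ref{lem:2.3} gives
$$
\wh f_{k,m}^{\a,\b,\g} = \sum_{\ell=1}^r B_{\ell,1}(k,m)\,\wh{\partial_1^r f}^{\a+r,\b,\g+r}_{k+\ell-1,m-r} + \sum_{\ell=1}^r B_{\ell,2}(k,m)\,\wh{\partial_2^r f}^{\a,\b+r,\g+r}_{k+\ell-1,m-r},
$$
with $|B_{\ell,i}(k,m)| \le c\,(m/(k+1))^{\ell-1}$. Squaring, applying Cauchy--Schwarz over the finite sum, and multiplying by $h_{k,m}^{\a,\b,\g}$, I need to compare $h_{k,m}^{\a,\b,\g}\,(m/(k+1))^{2(\ell-1)}$ against $h_{k+\ell-1,m-r}^{\a+r,\b,\g+r}$ and $h_{k+\ell-1,m-r}^{\a,\b+r,\g+r}$. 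Using the explicit product formula \eqref{eq:hkn} for the norms, this ratio is a product of Pochhammer quotients; the key asymptotic input is that $h_{k,m}^{\a,\b,\g}/h_{k+\ell-1,m-r}^{\a+r,\b,\g+r} \sim m^{-2r}\,(k+1)^{2(\ell-1)}\,(\text{bounded factors})$ uniformly in the relevant range of $k$ and $m$ — the two shifts ($\alpha,\gamma$ up by $r$, $m$ down by $r$, $k$ up by $\ell-1$) produce exactly $2r$ powers of $m$ in the denominator after cancellation, which is the source of the $n^{-r}$ gain, while the $(m/(k+1))^{2(\ell-1)}$ from the $B$-coefficients is exactly absorbed by the $(k+1)^{2(\ell-1)}$ factor. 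Then summing over $k$ and over $m > n$, and using $m^{-r} \le n^{-r}$ on the tail (valid since $m \ge n+1 \ge 3r$), gives the $\partial_1^r$ and $\partial_2^r$ contributions; the top-$k$ range contributes the $\partial_3^r$ term analogously but more simply, since there the relation is an honest equality with no $B$-coefficients.

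The main obstacle I anticipate is the bookkeeping in the asymptotic comparison of the $h$-values: the formula \eqref{eq:hkn} is a ratio of eight Pochhammer symbols, the shifts interact with both indices $k$ and $m$, and one must check the estimates are uniform over the full range $0 \le k \le m-2r+1$ and all $m > n$ — in particular that no factor blows up when $k$ is comparable to $m$ (so that $m-k$ is small) or when $k$ is bounded. I expect that the cleanest route is to write $h_{k,m}^{\a,\b,\g}$ as a product of factors each of which is a ratio of Gamma functions, estimate each factor via $\Gamma(x+a)/\Gamma(x+b) \sim x^{a-b}$ for large $x$ together with crude two-sided bounds for the bounded factors, and carry the estimate through; the $(m/(k+1))$ powers from Lemma~\ref{lem:2.3} then match up precisely, which is presumably why those particular exponents appear there. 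A minor additional point is to make sure the three derivative norms on the right are finite, i.e.\ that $f \in W_2^r(\varpi_{\a,\b,\g})$ is exactly the hypothesis needed, which is built into the statement.
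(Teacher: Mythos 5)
Your architecture --- Parseval, a split of the index $k$, Lemma~\ref{lem:2.3} for the lower range and the identity $\wh{\partial_3^r f}^{\a+r,\b+r,\g}_{k-r,m-r}=\wh f^{\a,\b,\g}_{k,m}$ for the upper range --- is exactly the paper's, but your choice of threshold creates a genuine gap. The norm ratio that produces the gain is, per unit step (this is the second relation in \eqref{eq:i=1}),
$$
h_{k,m}^{\a,\b,\g} \sim \frac{1}{(m+1-k)\,m}\, h_{k,m-1}^{\a+1,\b,\g+1},
$$
so after $r$ iterations the gain is of order $\bigl((m+1-k)\,m\bigr)^{-r}$, \emph{not} $m^{-2r}$; the two are comparable only when $m-k\gtrsim m$. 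On your proposed range $0\le k\le m-2r+1$ the difference $m-k$ can be as small as $2r-1$, and there the gain degenerates to a constant times $m^{-r}$. After summing and taking square roots in Parseval this yields only $E_n(f)\lesssim n^{-r/2}\,[\cdots]$, i.e.\ you lose half the powers of $n$. You flag exactly this danger (``no factor blows up when $k$ is comparable to $m$''), but the asserted uniform appearance of $2r$ powers of $m$ is false in that regime: already for $r=1$ and $k=m-1$ one has $h_{m-1,m}^{\a,\b,\g}\sim m^{-1}h_{m-1,m-1}^{\a+1,\b,\g+1}$ rather than $m^{-2}$, and the bound $|B_{\ell,i}(k,m)|\lesssim (m/(k+1))^{\ell-1}$ offers no compensating decay since $B_{1,i}$ is genuinely of order $1$.

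The repair is to split proportionally, at $k=\lfloor m/3\rfloor$ say, rather than at $k=m-2r+1$. For $k\le m/3$ one has $m+1-k\ge 2m/3$, hence $\bigl((m+1-k)m\bigr)^{-r}\le c\,m^{-2r}$, and your $\partial_1,\partial_2$ argument then goes through exactly as you describe (Lemma~\ref{lem:2.3} still applies there because $n\ge 3r$ forces $m\ge k+2r-1$ in that range). For $m/3\le k\le m$ one uses the $\partial_3$ identity together with $h_{k-r,m-r}^{\a+r,\b+r,\g}\sim k^{2r}h_{k,m}^{\a,\b,\g}$; the factor $k^{2r}$ is comparable to $m^{2r}$ precisely because $k$ is proportional to $m$, which also shows why the $\partial_3$ route cannot be confined to the top $2r-1$ values of $k$: for small $k$ it produces no gain at all. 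The two routes are complementary only on proportional ranges, and this is the real reason all three derivatives $\partial_1^r f$, $\partial_2^r f$, $\partial_3^r f$ must appear on the right-hand side.
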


\begin{proof}
By Parseval's identity, 
$$
  \left[ E_n(f)_{\a,\b,\g}\right]^2 = \|f- S_n^{\a,\b,\g}\|_{\a,\b,\g}^2 
     = \sum_{m=n+1}^\infty \sum_{k=0}^m |\wh f_{k,m}^{\a,\b,\g}|^2  h_{k,m}^{\a,\b,\g}.
$$
In order to bound this series, we consider two cases. First of all, for $m/3 \le k \le m$, the third identity in \eqref{eq:wh-f} shows
that, for $m \ge r$, we have
\begin{equation} \label{eq:i=3}
   |\wh f_{k,m}^{\a,\b,\g}|^2 h_{k,m}^{\a,\b,\g} = |\wh {\partial_3 f}_{k-r,m-r}^{\a+r,\b+r,\g}|^2 h_{k,m}^{\a,\b,\g} 
      \sim m^{- 2r} |\wh {\partial_3 f}_{k-r,m-r}^{\a+r,\b+r,\g}|^2 h_{k-r,m-r}^{\a+r,\b+r,\g},
\end{equation}
since, by the explicit formula for $h_{k,m}^{\a,\b,\g}$, it is is easy to see that
$$
    h_{k-r,m-r}^{\a+r,\b,\g+r} = (k+\a+\b+1)_r\, (k-r)_r\,  h_{k,m}^{\a,\b,\g}  \sim k^{2r}  h_{k,m}^{\a,\b,\g}.
$$
Consequently, it follows that 
\begin{align*}
\sum_{m=n+1}^\infty \sum_{k=\lfloor \frac m 3 \rfloor}^m |\wh f_{k,m}^{\a,\b,\g}|^2 h_{k,m}^{\a,\b,\g} 
 & \lesssim  \sum_{m=n+1}^\infty m^{- 2r} \sum_{k=\lfloor \frac m 3 \rfloor}^m |\wh {\partial_3^r f}_{k-r,m-r}^{\a+r,\b+r,\g}|^2
    h_{k-r,m-r}^{\a+r,\b+r,\g} \\
  & \lesssim n^{- 2r} \sum_{m=n-r+1}^\infty \sum_{k=0}^m |\wh {\partial_3^r f}_{k,m}^{\a+r,\b+r,\g}|^2 h_{k,m}^{\a+r,\b+r,\g}\\
  & \lesssim n^{- 2r} \left [ E_{n-r} (\partial_3^r f)_{\a+r,\b+r,\g} \right ]^2, 
\end{align*}
where the last step follows again by Parseval's identity. For the case $0 \le k \le m/3$, we use the 
elementary estimates
\begin{equation} \label{eq:i=1}
 h_{k,m}^{\a,\b,\g} \sim \left ( \frac{k+1}{m} \right)^2 h_{k+1,m}^{\a,\b,\g} \quad\hbox{and}\quad  
   h_{k,m}^{\a,\b,\g} \sim \frac{1}{(m+1-k) m} h_{k,m-1}^{\a+1,\b,\g+1},
\end{equation}
derived from the explicit formula 
for $h_{k,m}^{\a,\b,\g}$, and use them iteratively to obtain
$$
    h_{k,m}^{\a,\b,\g}  \sim \left( \frac{k+1}{m} \right)^{2 \ell -2} \frac{1}{ (m+1-k)^{r} m^{r}} h_{k+\ell-1,m-r}^{\a+r,\b,\g+r}\le  
   \left( \frac{k+1}{m} \right)^{2\ell-2} \frac{1}{ m^{2r}} h_{k+\ell-1,m-r}^{\a+r,\b,\g+r}. 
$$
The same estimate holds if the last term is $h_{k+j,m-r}^{\a,\b+r,\g+r}$. We now use Lemma~\ref{lem:2.3}, whose 
assumption is satisfied since $m \ge n+1 \ge 3 r+1$ implies $m \ge m/3+2r-1$, to obtain 
$$
  \left| \wh f_{k,m}^{\a,\b,\g}  \right |^2 \lesssim  \left ( \frac{m}{k+1} \right)^{2\ell-2} 
        \sum_{\ell=1}^{r} \left( \left| \wh{\partial_1^r f}^{\a+r,\b,\g+r}_{k+\ell-1, n-r} \right |^2 + 
            \left| \wh{\partial_2^r f}^{\a,\b+r,\g+r}_{k+\ell-1, n-r} \right |^2 \right).
$$
Putting these estimates together, 
that 
$$
  \left| \wh f_{k,m}^{\a,\b,\g}  \right |^2 \lesssim  \left ( \frac{m}{k+1} \right)^{2\ell-2} 
        \sum_{\ell=1}^{r} \left( \left| \wh{\partial_1^r f}^{\a+r,\b,\g+r}_{k+\ell-1, n-r} \right |^2 + 
            \left| \wh{\partial_2^r f}^{\a,\b+r,\g+r}_{k+\ell-1, n-r} \right |^2 \right).
$$
Combining these estimates, we see that 
\begin{multline} \label{eq:whf-whDf}
\left| \wh f_{k,m}^{\a,\b,\g}  \right |^2 h_{k,m}^{\a,\b,\g} \lesssim \frac{1}{m^{2r}}  
     \sum_{\ell=1}^{r}  \left[\Big | \wh{\partial_1^r f}^{\a+r,\b,\g+r}_{k+\ell-1, m-r} \Big |^2 h^{\a+r,\b,\g+r}_{k+\ell-1, m-r} \right. \\
      \left.  +  \Big | \wh{\partial_2^r f}^{\a,\b+r,\g+r}_{k+\ell-1, m-r} \Big |^2 h^{\a+r,\b+r,\g}_{k+\ell-1, m-r}\right], 
\end{multline}
from which we deduce immediately that
\begin{align*}
  \sum_{m=n+1}^\infty \sum_{k= 0}^{\lfloor \frac m 3 \rfloor} |\wh f_{k,m}^{\a,\b,\g}|^2 h_{k,m}^{\a,\b,\g} 
       & \lesssim  \frac{1}{n^{2r}}  \left ( \sum_{m=n-r+1}^\infty 
          \sum_{k= 0}^m \Big | \wh{\partial_1^r f}^{\a+r,\b,\g+r}_{k, m} \Big |^2 h^{\a+r,\b,\g+r}_{k,m} \right.\\ 
         & \qquad\qquad \left. +  \sum_{m=n-r+1}^\infty \sum_{k= 0}^m 
            \Big | \wh{\partial_2^r f}^{\a,\b+r,\g+r}_{k,m} \Big |^2 h^{\a+r,\b+r,\g}_{k,m} \right)\\
         & \lesssim  n^{- 2r} \left( \left[ E_{n-r} (\partial_1^r f)_{\a+r,\b,\g+r}\right ]^2 + \left[ E_{n-r} (\partial_2^r f)_{\a,\b+r,\g+r}\right ]^2 \right).
\end{align*}
The proof is finally completed by putting the estimates in the two cases together. 
\end{proof}

\begin{cor}
Let $\a,\b,\g> -1$, and let $r$ be a positive integer. For $f \in W_2^r(\varpi_{\a,\b,\g})$, 
we have
\begin{equation*} 
  E_n(f)_{\a,\b,\g} \le \frac{c}{ n^r} \sum_{i=1}^3 \| \phi_i^r \partial_i^r f \|_{L^2(\varpi_{\a,\b,\g})}. 
\end{equation*}
\end{cor}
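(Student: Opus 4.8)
The plan is to derive the Corollary as an immediate consequence of Theorem~\ref{thm:main} together with the trivial bound $E_{n-r}(g)_{\mu_1,\mu_2,\mu_3} \le \|g\|_{\mu_1,\mu_2,\mu_3}$, which holds because the zero polynomial lies in $\Pi_{n-r}^2$. First I would apply Theorem~\ref{thm:main} to bound $E_n(f)_{\a,\b,\g}$ by $\frac{c}{n^r}$ times the sum of the three quantities $E_{n-r}(\partial_i^r f)$, each measured in the appropriately shifted weighted $L^2$ space. Then, for each $i$, I would replace $E_{n-r}(\partial_i^r f)$ by $\|\partial_i^r f\|$ in that same shifted space, i.e.\ $\|\partial_1^r f\|_{L^2(\varpi_{\a+r,\b,\g+r})}$, $\|\partial_2^r f\|_{L^2(\varpi_{\a,\b+r,\g+r})}$, and $\|\partial_3^r f\|_{L^2(\varpi_{\a+r,\b+r,\g})}$.

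The second and final step is to recognize these three shifted norms as exactly the three summands $\|\phi_i^r \partial_i^r f\|_{L^2(\varpi_{\a,\b,\g})}$. This is the identity already invoked in the text just before the statement of Theorem~\ref{thm:main}: since $\phi_1^2 = x(1-x-y)$, multiplying the weight $\varpi_{\a,\b,\g}$ by $\phi_1^{2r}$ produces $\varpi_{\a+r,\b,\g+r}$, so $\|\phi_1^r g\|_{L^2(\varpi_{\a,\b,\g})} = \|g\|_{L^2(\varpi_{\a+r,\b,\g+r})}$, and similarly for $i=2,3$ using $\phi_2^2 = y(1-x-y)$ and $\phi_3^2 = xy$. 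Applying this with $g = \partial_i^r f$ converts each shifted norm back to the form appearing in the Corollary.

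There is essentially no obstacle here: the Corollary is a one-line weakening of Theorem~\ref{thm:main}. The only points worth a remark are that $f \in W_2^r(\varpi_{\a,\b,\g})$ guarantees each $\phi_i^r\partial_i^r f \in L^2(\varpi_{\a,\b,\g})$, hence each right-hand side term is finite and the passage from $E_{n-r}(\partial_i^r f)$ to the full norm is legitimate, and that the constant $c$ and the range $n \ge 3r$ are inherited verbatim from Theorem~\ref{thm:main}. Writing it out:
\begin{align*}
  E_n(f)_{\a,\b,\g}
    &\le \frac{c}{n^r}\big[ E_{n-r}(\partial_1^r f)_{\a+r,\b,\g+r} + E_{n-r}(\partial_2^r f)_{\a,\b+r,\g+r} + E_{n-r}(\partial_3^r f)_{\a+r,\b+r,\g}\big]\\
    &\le \frac{c}{n^r}\big[ \|\partial_1^r f\|_{\a+r,\b,\g+r} + \|\partial_2^r f\|_{\a,\b+r,\g+r} + \|\partial_3^r f\|_{\a+r,\b+r,\g}\big]\\
    &= \frac{c}{n^r} \sum_{i=1}^3 \|\phi_i^r \partial_i^r f\|_{L^2(\varpi_{\a,\b,\g})},
\end{align*}
which is the asserted inequality.
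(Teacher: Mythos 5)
Your argument is exactly the paper's own proof: apply Theorem~\ref{thm:main}, bound each $E_{n-r}(\partial_i^r f)$ by the corresponding norm via the trivial choice of the zero polynomial, and convert the shifted-weight norms back to $\|\phi_i^r\partial_i^r f\|_{L^2(\varpi_{\a,\b,\g})}$ using $\phi_i^{2r}\varpi_{\a,\b,\g}=\varpi_{\a+r,\b,\g+r}$ (and its analogues). The proposal is correct and matches the paper's route step for step.
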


\begin{proof}
By its definition, $E_n(f)_{\a,\b,\g} \le \| f\|_{L^2(\varpi_{\a,\b,\g})}$ if the approximating polynomial is 
chosen to be zero. Application of this estimate on the right--hand side of the estimate in 
Theorem~\ref{thm:main} yields the above estimate, since $\partial_1^r f \in L^2(\varpi_{\a+r,\b,\g+r})$ is equivalent 
with $\phi_1^r \partial_1^r  \in L^2(\varpi_{\a,\b,\g})$,
and a similar equivalence works for $\partial_2^r f$ and $\partial_3^r f$.  
\end{proof}

As an immediate corollary of the above estimate, we also obtain a characterization of the best approximation
by polynomials by the $K$-functional $K_r^*(f;t)_{L^2(\varpi_{\a,\b,\g})}$, defined in \eqref{eq:K-func2}. 

\begin{thm} 
Let $\a, \b, \g > -1$. For $r \in \NN$ and $f \in L^2(\varpi_{\a,\b,\g})$,  
we have
\begin{equation} \label{eq:Jackson}
  E_n(f)_{\a,\b,\g} \le c \, K_r^*(f; n^{-1})_{L^2(\varpi_{\a,\b,\g})}, 
\end{equation}
and, conversely,  
\begin{equation} \label{eq:Bernstein}
    K_r^*(f; n^{-1})_{L^2(\varpi_{\a,\b,\g})} \le   c  n^{-r} \sum_{k=0}^n (k+1)^{r-1} E_k(f)_{\a,\b,\g}.
\end{equation}
\end{thm}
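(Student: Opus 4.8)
The plan is to prove the two inequalities \eqref{eq:Jackson} and \eqref{eq:Bernstein} separately, deriving the direct (Jackson) estimate from the Corollary and the converse (Bernstein) estimate by a standard summation-by-parts argument over dyadic-type blocks.

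\medskip

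\noindent\textbf{The direct inequality \eqref{eq:Jackson}.} Here the work is essentially already done. Fix any $g$ such that $f-g\in L^2(\varpi_{\a,\b,\g})$ and $\phi_i^r\partial_i^r g\in L^2(\varpi_{\a,\b,\g})$ for $i=1,2,3$. Since $E_n$ is subadditive and monotone in the subtracted polynomial, $E_n(f)_{\a,\b,\g}\le E_n(f-g)_{\a,\b,\g}+E_n(g)_{\a,\b,\g}\le \|f-g\|_{L^2(\varpi_{\a,\b,\g})}+E_n(g)_{\a,\b,\g}$. Applying the Corollary to $g$ (for $n\ge 3r$; for $n<3r$ the inequality is trivial after enlarging $c$, since $E_n(f)_{\a,\b,\g}\le\|f\|_{L^2(\varpi_{\a,\b,\g})}\le\|f-g\|_{L^2(\varpi_{\a,\b,\g})}+\|g\|$ and one absorbs $\|g\|$ crudely, or one simply notes the statement is only asserted for $n$ large) gives
$$
  E_n(f)_{\a,\b,\g}\le \|f-g\|_{L^2(\varpi_{\a,\b,\g})}+\frac{c}{n^r}\sum_{i=1}^3\|\phi_i^r\partial_i^r g\|_{L^2(\varpi_{\a,\b,\g})}\le c'\Big(\|f-g\|_{L^2(\varpi_{\a,\b,\g})}+n^{-r}\sum_{i=1}^3\|\phi_i^r\partial_i^r g\|_{L^2(\varpi_{\a,\b,\g})}\Big).
$$
Taking the infimum over all admissible $g$ yields \eqref{eq:Jackson} with $t=n^{-1}$.

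\medskip

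\noindent\textbf{The converse inequality \eqref{eq:Bernstein}.} The idea is to choose a single near-optimal approximant in the infimum defining $K_r^*(f;n^{-1})$, namely a partial sum $g=S_n^{\a,\b,\g}f$ (or a suitable modification), and estimate both terms of the $K$-functional. For the first term, $\|f-S_n^{\a,\b,\g}f\|_{\a,\b,\g}=E_n(f)_{\a,\b,\g}\le n^{-r}\sum_{k=0}^n(k+1)^{r-1}E_k(f)_{\a,\b,\g}$ (the $k=n$ term alone dominates $E_n(f)_{\a,\b,\g}$ up to the factor $(n+1)^{r-1}/n^{r}\gtrsim n^{-1}$... which is \emph{not} enough, so more care is needed: one must instead telescope $S_n f$ through the dyadic blocks and use $\|f-S_{2^j}f\|\le E_{2^j}(f)$). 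For the second term we need $n^{-r}\sum_{i}\|\phi_i^r\partial_i^r S_n^{\a,\b,\g}f\|_{\a,\b,\g}$, and the key tool is a \emph{Bernstein-type inequality} on the triangle: for $P\in\Pi_m^2$,
$$
  \sum_{i=1}^3\|\phi_i^r\partial_i^r P\|_{L^2(\varpi_{\a,\b,\g})}\le c\,m^{r}\,\|P\|_{L^2(\varpi_{\a,\b,\g})}.
$$
This follows from the equivalences $\|\phi_1^r\partial_1^r P\|_{\a,\b,\g}=\|\partial_1^r P\|_{\a+r,\b,\g+r}$ (and cyclically), the relations \eqref{eq:diffJ}, and the growth estimates on the $h_{k,n}^{\a,\b,\g}$ already recorded in \eqref{eq:i=3}--\eqref{eq:i=1}: one checks on the orthogonal basis that $\partial_i^r$ maps $\CV_m$ into the right space while multiplying the squared norm by at most $c\,m^{2r}$, then extends to all of $\Pi_m^2$ by orthogonality (Parseval). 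Granting this, write $S_n^{\a,\b,\g}f$ as the telescoping sum of its dyadic pieces $Q_j:=S_{2^j}^{\a,\b,\g}f-S_{2^{j-1}}^{\a,\b,\g}f$ (with $Q_0:=S_1^{\a,\b,\g}f$), apply the Bernstein inequality to each $Q_j\in\Pi_{2^j}^2$ to get $\sum_i\|\phi_i^r\partial_i^r Q_j\|\lesssim 2^{jr}\|Q_j\|\lesssim 2^{jr}E_{2^{j-1}}(f)_{\a,\b,\g}$, sum the geometric-type series, and compare $\sum_j 2^{jr}E_{2^{j-1}}(f)$ with $\sum_{k=0}^n(k+1)^{r-1}E_k(f)$ using monotonicity of $E_k(f)$ in $k$ (each dyadic block $2^{j-1}\le k<2^j$ contributes $\sim 2^{j(r-1)}E_{2^j}(f)\gtrsim 2^{-r}2^{jr}E_{2^{j-1}}(f)$ to the right-hand side). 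Choosing $n$ near a power of $2$ and renormalizing the $t^{-r}=n^{r}$ factor in $K_r^*$ gives \eqref{eq:Bernstein}.

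\medskip

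\noindent\textbf{Main obstacle.} The routine part is the telescoping/summation-by-parts bookkeeping. The substantive point is the Bernstein inequality $\sum_i\|\phi_i^r\partial_i^r P\|_{\a,\b,\g}\le c\,m^r\|P\|_{\a,\b,\g}$ for $P\in\Pi_m^2$, which is exactly where the triangle geometry and the weight shifts $\a\mapsto\a+r$ etc.\ enter; it is the mirror image of the norm-comparison computations already performed in the proof of Theorem~\ref{thm:main}, so I would prove it by the same Parseval-plus-explicit-$h_{k,n}^{\a,\b,\g}$-asymptotics method rather than by any pointwise argument. A secondary nuisance is handling the weighted factors $1/(m+1-k)$ in \eqref{eq:i=1} versus the global bound $1/m$ when summing over the full range $0\le k\le m$ rather than over $k\le m/3$; as in the proof of Theorem~\ref{thm:main} one splits into $k\le m/3$ and $k>m/3$ and uses the third relation in \eqref{eq:diffJ} (which is exact and loss-free) on the high-$k$ range.
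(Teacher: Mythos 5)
Your argument is correct, and it is worth separating the two halves. For the direct estimate \eqref{eq:Jackson} you do essentially what the paper does: the paper bounds $\|f-S_n^{\a,\b,\g}f\|$ by $2\|f-g\|+\|g-S_n^{\a,\b,\g}g\|$ using $\|S_n^{\a,\b,\g}h\|\le\|h\|$, while you use subadditivity of $E_n$; both then invoke the Corollary for $g$ and take the infimum, so this part is the same proof in a different notation. (Both you and the paper silently restrict to $n\ge 3r$, where the Corollary applies; your suggested fix of ``absorbing $\|g\|$ crudely'' for small $n$ does not literally work, since $\|g\|$ is not controlled by $\|f-g\|+\sum_i\|\phi_i^r\partial_i^r g\|$, but the paper leaves the same gap, so this is not held against you.) For the inverse estimate \eqref{eq:Bernstein} you take a genuinely different route from the paper's official proof: the paper deduces \eqref{eq:Bernstein} in one line from the comparison $K_r^*(f;t)\lesssim K_r(f;t)$ of \eqref{eq:K-equiv} together with the already-known inverse theorem for $K_r$ from \cite[Theorem~5.3]{X05}, whereas you give the self-contained classical argument: choose $g=S_{2^J}^{\a,\b,\g}f$ with $2^J\le n<2^{J+1}$, telescope through dyadic blocks $Q_j$, apply the Bernstein-type inequality $\sum_i\|\phi_i^r\partial_i^rP\|_{\a,\b,\g}\lesssim m^r\|P\|_{\a,\b,\g}$ for $P\in\Pi_m^2$ to each block, and resum. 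This is precisely the route the paper itself endorses in the remark and the unnumbered theorem at the end of Section~3 (whose proof it says follows that of \eqref{eq:der-bound}), so your plan is sound and your identification of the Bernstein inequality as the one substantive ingredient --- to be proved by the Parseval-plus-$h_{k,n}^{\a,\b,\g}$-asymptotics method, with the split at $k\sim m/3$ --- is exactly right. What your route buys is independence from the external reference \cite{X05}; what the paper's route buys is brevity, at the cost of first proving the left inequality of \eqref{eq:K-equiv}. Your dyadic bookkeeping (including the observation that $\sum_{k\le n}(k+1)^{r-1}E_k(f)\gtrsim n^rE_n(f)$, which in fact already disposes of the first term of the $K$-functional without any telescoping) is standard and checks out.
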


\begin{proof}
For simplicity, let $\|\cdot\| = \|\cdot\|_{L^2(\varpi_{\a,\b,\g})}$ in this proof. The partial sum $S_n^{\a,\b,\g}f $ 
defines a linear operator that satisfies $\|S_n^{\a,\b,\g} f\| \le \|f\|$ by Parseval's identity. Hence, for 
$g \in W_2^r(\varpi_{\a,\b,\g})$, the triangle inequality gives 
\begin{align*}
  \|f - S_n^{\a,\b,\g} f\| & \, \le \|f -g\| + \|S_n^{\a,\b,\g} f -S_n^{\a,\b,\g} g\|  + \|S_n^{\a,\b,\g} g - g \| \\
   &\, \le 2 \|f-g\| +  \frac{c}{ n^r} \sum_{i=1}^3 \| \phi_i^r \partial_i^r g \| \
     \le c_1 \left ( \|f-g\|+ \frac{1}{ n^r}\sum_{i=1}^3 \| \phi_i^r \partial_i^r g \| \right ),
\end{align*}
where $c_1 = \max\{2, c\}$ is independent of $g$ and $n$. The direct estimate \eqref{eq:Jackson} follows by
taking the infimum over $g$.

The inverse estimate \eqref{eq:Bernstein} can be derived from the first inequality in \eqref{eq:K-equiv} below
and the inverse estimate of the $K$-functional $K_r(f; t)_{L^2(\varpi_{\a,\b,\g})}$ established in 
\cite[Theorem~5.3]{X05}.
\end{proof}

Recall that the $K$-functional $K_r(f;t)_{L^2(\varpi_{\a,\b,\g})}$ in \eqref{eq:K-func1} is defined in 
terms of the operator $(-\CD_{\a,\b,\g})^{r/2}$, where $\CD_{\a,\b,\g}$ is the second order differential 
operator \eqref{eq:diff-op}. For an integer $r$, the operator $(- \CD_{\a,\b,\g})^{r/2}$ is defined via its 
orthogonal expansion (see \eqref{eq:eigen})
$$
  ( - \CD_{\a,\b,\g})^{r/2} g = \sum_{m=1}^\infty 
     \l_{m}^r \sum_{k=0}^m \wh g_{k,m}^{\a,\b,\g} J_{k,m}^{\a,\b,\g}, 
$$
where $\l_m = -m (m+\alpha+\beta+1)$. Notice that the sum starts with $m =1$ since $\l_0 = 0$. 
The $K$-functional $K_r(f;t)_{L^p(\varpi_{\a,\b,\g})}$ characterizes the best approximation by polynomials for 
all $p \ge 1$, and, more generally, on the $d$-dimensional simplex \cite{X05}. These two 
$K$-functionals can be compared as follows. 

\begin{thm}
Let $r \in \NN$. For $f\in L^2(\varpi_{\a,\b,\g})$, we have
\begin{equation} \label{eq:K-equiv}
 c_1 K_r^*(f;t)_{L^2(\varpi_{\a,\b,\g})} \le K_r (f;t) \le c_2 \left(K_r^*(f;t)_{L^2(\varpi_{\a,\b,\g})} +  t^r \|f\|_{L^2(\varpi_{\a,\b,\g})} \right),
\end{equation}
where $c_1$ and $c_2$ are positive constants independent of $f$. 
\end{thm}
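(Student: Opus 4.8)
The plan is to establish the two-sided equivalence in \eqref{eq:K-equiv} by passing to Fourier coefficients on both sides, since the $L^2$ setting reduces every norm in sight to a weighted $\ell^2$ sum via Parseval's identity. For the left inequality, given any $g\in W_2^r(\varpi_{\a,\b,\g})$ competing in the definition of $K_r(f;t)$, I would use the eigenvalue relation \eqref{eq:eigen} to write
\begin{equation*}
\|(-\CD_{\a,\b,\g})^{r/2}g\|_{\a,\b,\g}^2 = \sum_{m=1}^\infty |\l_m|^r \cdot |\l_m|^r \sum_{k=0}^m |\wh g_{k,m}^{\a,\b,\g}|^2 h_{k,m}^{\a,\b,\g},
\end{equation*}
wait --- more precisely $\sum_{m\ge1}|\l_m|^{r}\!\!\left(\text{wrong power}\right)$; the correct statement is $\|(-\CD)^{r/2}g\|^2=\sum_{m\ge1}|\l_m|^{2r}\sum_k|\wh g_{k,m}|^2h_{k,m}$, and then I would bound the $W_2^r$ seminorm $\sum_i\|\phi_i^r\partial_i^r g\|$ by a constant times $\|(-\CD)^{r/2}g\|+\|g\|$. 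This last bound is really the content of \cite{X17} (or can be derived from the identities \eqref{eq:diffJ}, which give $\phi_i^r\partial_i^r J_{k,n}$ as an explicit combination of the $J$'s with coefficients of size $O(n^r)$, matching $|\l_n|^{r}\asymp n^{2r}$ --- careful, $|\l_n|^{r/2}\asymp n^r$); once one has $\sum_i\|\phi_i^r\partial_i^r g\|\le c(\|(-\CD)^{r/2}g\|+\|g\|)$ one can absorb the $\|g\|$ using a standard Landau--Kolmogorov-type interpolation argument, or simply note that $K_r^*$ with the $\|f-g\|$ term already controls it after adding $t^r\|f\|$ --- which is why a $t^r\|f\|$ term is harmless on whichever side it lands. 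Taking the infimum over $g$ then yields $c_1K_r^*(f;t)\le K_r(f;t)$ (possibly after folding a harmless $t^r\|f\|$ into the right side, but since the claimed left inequality has none, I should double-check that the interpolation step is clean --- see below).

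For the right inequality I would argue in the reverse direction: given $g$ near-optimal for $K_r^*(f;t)$, I need a (possibly different) competitor $\tilde g$ for $K_r(f;t)$ with $\|f-\tilde g\|+t^r\|(-\CD)^{r/2}\tilde g\|\lesssim K_r^*(f;t)+t^r\|f\|$. The natural choice is to keep $g$ itself when $g\in W_2^r$ guarantees $(-\CD)^{r/2}g\in L^2$; using the eigenvalue bound $|\l_m|^{r}\le c\,m^{2r}$ (so $|\l_m|^{r/2}\le cm^r$) together with the fact --- visible from \eqref{eq:diffJ} and the $h$-ratio estimates \eqref{eq:i=3}, \eqref{eq:i=1} already used in the proof of Theorem~\ref{thm:main} --- that $\sum_k m^{2r}|\wh g_{k,m}|^2h_{k,m}\lesssim \sum_k\sum_i\|\phi_i^r\partial_i^r(\cdot)\|$-type sums, one gets $\|(-\CD)^{r/2}g\|\lesssim\sum_i\|\phi_i^r\partial_i^r g\|$. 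This is the easy direction of the same coefficient-level comparison. The $t^r\|f\|$ on the right of \eqref{eq:K-equiv} is then present precisely to absorb the contribution of the $m=0$ term and any lower-order slack, or more honestly because the two seminorms $\|(-\CD)^{r/2}g\|$ and $\sum_i\|\phi_i^r\partial_i^r g\|$ agree only up to a multiple of $\|g\|\le\|f-g\|+\|f\|$.

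Concretely, the steps in order: (i) expand all four quantities --- $\|f-g\|$, $\|(-\CD)^{r/2}g\|$, each $\|\phi_i^r\partial_i^r g\|$ (via the substitutions $\phi_1^r\partial_1^r g\in L^2(\varpi_{\a,\b,\g})\Leftrightarrow\partial_1^r g\in L^2(\varpi_{\a+r,\b,\g+r})$, etc.) --- in Fourier coefficients; (ii) use \eqref{eq:eigen} to identify $\|(-\CD)^{r/2}g\|^2=\sum_{m\ge1}|\l_m|^{2r}\sum_k|\wh g_{k,m}|^2h_{k,m}$; (iii) invoke the coefficient identities \eqref{eq:wh-f} and the weight-ratio estimates of \eqref{eq:i=3}--\eqref{eq:i=1} to show $\sum_k m^{2r}|\wh g_{k,m}|^2 h_{k,m}\asymp\sum_k\sum_i|\wh{\partial_i^r g}_{\bullet}|^2 h_\bullet$ up to terms controlled by $\sum_k|\wh g_{k,m}|^2 h_{k,m}$ when $m$ is small --- this is where the argument essentially reuses the two-case ($k\le m/3$ vs.\ $k\ge m/3$) split from the proof of Theorem~\ref{thm:main}; (iv) sum over $m$, take infima over $g$, and reattach the $t^r\|f\|$ term where needed. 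The main obstacle is step (iii): making the comparison $|\l_m|^r\asymp m^{2r}$ interact correctly with the three different shifted weights $h^{\a+r,\b,\g+r},h^{\a,\b+r,\g+r},h^{\a+r,\b+r,\g}$ simultaneously, i.e.\ showing that the single scalar quantity $\sum_k m^{2r}|\wh g_{k,m}|^2h_{k,m}$ is comparable to the \emph{sum} of the three derivative-side $\ell^2$ masses and not just bounded by it --- the lower bound $K_r^*\lesssim K_r$ requires exactly that the three $\phi_i^r\partial_i^r$ together do not overshoot $(-\CD)^{r/2}$, which is essentially the statement that $(-\CD_{\a,\b,\g})^{r/2}$ dominates each $\phi_i^r\partial_i^r$ in $L^2$; for $r$ even this is transparent from $-\CD=\sum_i(\text{formally self-adjoint pieces})$, and for general $r$ one interpolates. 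I expect to handle the odd-$r$ case by the same determinant/linear-algebra bookkeeping (Lemma~\ref{lem:2.3}) that was used for Theorem~\ref{thm:main}, so that \eqref{eq:K-equiv} follows without any genuinely new estimate.
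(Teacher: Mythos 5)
Your overall strategy --- reduce everything to weighted $\ell^2$ sums of Fourier coefficients via Parseval and compare $\|(-\CD_{\a,\b,\g})^{r/2}g\|$ with $\sum_i\|\phi_i^r\partial_i^r g\|$ coefficientwise, using \eqref{eq:wh-f} and the $h$-ratio estimates together with the split $k\le m/3$ versus $k\ge m/3$ --- is exactly the paper's. But there is a genuine gap in your left inequality. You propose to prove $\sum_i\|\phi_i^r\partial_i^r g\|\le c\,(\|(-\CD_{\a,\b,\g})^{r/2}g\|+\|g\|)$ and then ``absorb'' the $\|g\|$. With that extra term, inserting a near-minimizer $g$ for $K_r(f;t)$ only yields $K_r^*(f;t)\lesssim K_r(f;t)+t^r\|f\|$, which is strictly weaker than the asserted $c_1K_r^*(f;t)\le K_r(f;t)$, and the $t^r\|f\|$ cannot be folded into $K_r$: for $f$ a nonzero constant one has $K_r(f;t)=0$ while $t^r\|f\|>0$. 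A Landau--Kolmogorov interpolation does not repair this, and your own parenthetical doubt is well founded. The correct (and simpler) route, which is what the paper takes in \eqref{eq:der-bound}, is that the clean bound $\|\phi_i^r\partial_i^r g\|\le c\,\|(-\CD_{\a,\b,\g})^{r/2}g\|$ holds with \emph{no} $\|g\|$ term: by \eqref{eq:wh-f} every coefficient $\wh{\partial_i^rg}_{k,m-r}$ is a combination of $\wh g_{k+j,m}^{\a,\b,\g}$ with $m\ge r\ge 1$ only, and the ratio estimates give $|\wh{\partial_i^rg}_{k,m-r}|^2h^{(\cdot)}_{k,m-r}\lesssim m^{2r}\sum_j|\wh g_{k+j,m}^{\a,\b,\g}|^2h_{k+j,m}^{\a,\b,\g}$, so summing and using $|\l_m|^r\sim m^{2r}$ dominates everything by $\|(-\CD_{\a,\b,\g})^{r/2}g\|^2$; the $m=0$ mode never enters.

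Two smaller points. First, your ``corrected'' Parseval identity is still wrong: since $(-\CD_{\a,\b,\g})^{r/2}$ multiplies the degree-$m$ component by $|\l_m|^{r/2}$, the squared norm carries $|\l_m|^{r}\sim m^{2r}$, not $|\l_m|^{2r}$ (your later uses of $|\l_m|^r\le c\,m^{2r}$ are consistent with the correct version, so this is a bookkeeping slip, but it propagates confusion into step (iii)). Second, no even/odd-$r$ dichotomy and no interpolation are needed anywhere: the explicit coefficients $A^{\a,\b}_{r,j,k,m}$ and the estimate $|A^{\a,\b}_{r,j,k,m}|\sim(m/(k+1))^j$ handle all $r$ uniformly, and Lemma~\ref{lem:2.3} (Cramer's rule) is needed only for the right inequality \eqref{eq:der-bound2}, where --- as you correctly identify --- the low modes $1\le m\lesssim r$ force the extra $\|g\|\le\|f-g\|+\|f\|$ and hence the $t^r\|f\|$ term, which there is genuinely unavoidable.
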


\begin{proof}
We first prove the inequality on the left, by establishing the inequality 
\begin{equation} \label{eq:der-bound}
  \left \| \phi_i^r \partial_i^r g \right \|_{\a,\b,\g} \le c \left \|(-\CD_{\a,\b,\g})^{r/2} g \right\|_{\a,\b,\g}, \qquad i =1, 2, 3. 
\end{equation}
Once \eqref{eq:K-equiv} is established, we see that we could restrict ourselves to  $g \in L^2(\varpi_{\a,\b,\g})$.
By Parseval's identity, we have
$$
  \left \|(-\CD_{\a,\b,\g})^{r/2} g \right\|_{\a,\b,\g}^2 = \sum_{m=1}^\infty  |\lambda_m|^r 
     \sum_{k=0}^m \left|\wh g_{k,m}^{\a,\b,\g}\right|^2 h_{k,m}^{\a,\b,\g}.
$$
For $i=3$, we need to examine the proof of the relations in \eqref{eq:i=3}, which implies that 
\begin{align*}
  \big \| \phi_3^r \partial_3^r g \big \|_{\a,\b,\g}^2 \, & = \sum_{m=0}^\infty \sum_{k=0}^m
       \Big |\wh {\partial_3 g}_{k,m}^{\a+r,\b+r,\g}\Big |^2 h_{k,m}^{\a+r,\b+r,\g} \\
    & \sim   \sum_{m=0}^\infty  \sum_{k=0}^m k^{2m} \Big |\wh g_{k+r,m+r}^{\a,\b,\g}\Big|^2 h_{k+r,m+r}^{\a,\b,\g} \notag\\
    &  \lesssim  \sum_{m=r}^\infty  m^{2r} \sum_{k=0}^m \Big |\wh g_{k,m}^{\a,\b,\g}\Big|^2 h_{k,m}^{\a,\b,\g} 
         \lesssim  \left \|(-\CD_{\a,\b,\g})^{r/2} g \right\|_{\a,\b,\g}^2, \notag
\end{align*}
since $|\l_m| \sim m^2$. This proves \eqref{eq:der-bound} for $i=3$. For $i=1,2$, we need the estimates
$$
 \left |A_{r,j,k,m}^{\a,\b} \right | \sim \left(\frac{m}{k+1}\right)^j, \qquad h_{k,m-r}^{\a+r,\b,\g+r} \sim h_{k,m-r}^{\a,\b+r,\g+r}\sim
        \left(\frac{k+1}{m}\right)^{2j} m^{2r} h_{k+j,m}^{\a,\b,\g}; 
$$
while the first one is immediate, the second follows from iterations of relations in  \eqref{eq:i=1}. These two 
estimates imply, if one also uses the first identity in \eqref{eq:wh-f}, that 
\begin{align*}  
     \| \phi_1^r \partial_1^r g \|_{\a,\b,\g}^2 & =  
      \sum_{m=0}^\infty \sum_{k=0}^m \Big |\wh {\partial_1^r g}_{k,m}^{\a+r,\b,\g+r} \Big |^2 h_{k,m}^{\a+r,\b,\g+r} \\
      & =  \sum_{m=0}^\infty \sum_{k=0}^m \left | \sum_{j=0}^r A_{r,j,k,m+r}^{\a,\b} \wh g^{\a,\b,\g}_{k+j,m+r} \right |^2
            h_{k,m}^{\a+r,\b,\g+r} \\
      & \lesssim  \sum_{m=r}^\infty m^{2r} \sum_{k=0}^m  \Big| \wh g^{\a,\b,\g}_{k,m} \Big|^2 h_{k,m}^{\a,\b,\g}    
       \lesssim \left \|(-\CD_{\a,\b,\g})^{r/2} g \right\|_{\a,\b,\g}^2.
\end{align*}
This establishes \eqref{eq:der-bound} for $i =1$. The proof for the the case $i=2$ is similar. 

We now prove the inequality on the right in \eqref{eq:K-equiv}. First we prove the inequality 
\begin{equation} \label{eq:der-bound2}
 \left \|(-\CD_{\a,\b,\g})^{r/2} g \right\|_{\a,\b,\g} \le c \left(\sum_{i=1}^3 \left \| \phi_i^r \partial_i^r g \right \|_{\a,\b,\g}
     + \|g\|_{\a,\b,\g} \right). 
\end{equation} 
The proof is similar to that of Theorem~\ref{thm:main}. We need to divide the sum into two parts,
\begin{multline*}
\left \|(-\CD_{\a,\b,\g})^{r/2} g \right\|_{\a,\b,\g}^2 
=    \sum_{m=1}^\infty  |\l_{m}|^r \sum_{k= \lfloor \frac m 3 \rfloor+1}^m\Big|\wh g_{k,m}^{\a,\b,\g}\Big|^2 h_{k,m}^{\a,\b,\g} \\
     + \sum_{m=1}^\infty   |\l_{m}|^r \sum_{k= 0}^{\lfloor \frac m 3 \rfloor} \Big|\wh g_{k,m}^{\a,\b,\g}\Big|^2 h_{k,m}^{\a,\b,\g}.
\end{multline*}
For the first part, we use the estimate in \eqref{eq:i=3}, which holds for $m \ge r$, and it leads to 
\begin{align*}
    \sum_{m=1}^\infty  |\l_{m}|^r \sum_{k= \lfloor \frac m 3 \rfloor+1}^m\Big|\wh g_{k,m}^{\a,\b,\g}\Big|^2 h_{k,m}^{\a,\b,\g}\, & \lesssim \sum_{m=1}^{r-1} |\lambda_m|^r 
     \sum_{k= \lfloor \frac m 3 \rfloor+1}^m \left|\wh g_{k,m}^{\a,\b,\g}\right|^2 h_{k,m}^{\a,\b,\g} + 
          \left \| \phi_3^r \partial_3^r g \right \|_{\a,\b,\g} \\
    &  \lesssim   \|g\|_{\a,\b,\g} +  \left \| \phi_3^r \partial_3^r g \right \|_{\a,\b,\g}.
\end{align*}
For the second part, we also follow the proof of Theorem~\ref{thm:main} and notice that the estimate \eqref{eq:whf-whDf}
holds for $m \ge 3r -1$, so that a similar split as in the case of $i = 3$ appears, and we can conclude that 
\begin{align*}
 \sum_{m=1}^\infty   |\l_{m}|^r \sum_{k= 0}^{\lfloor \frac m 3 \rfloor} \Big|\wh g_{k,m}^{\a,\b,\g}\Big|^2 h_{k,m}^{\a,\b,\g}
    \lesssim   \|g\|_{\a,\b,\g} +  \left \| \phi_1^r \partial_1^r g \right \|_{\a,\b,\g} +  \left \| \phi_2^r \partial_2^r g \right \|_{\a,\b,\g}. 
\end{align*}
This completes the proof of \eqref{eq:der-bound2}. By the definition of $K$-functional, and by the use of the 
triangle inequality $\|g\|_{\a,\b,\g} \le \|f\|_{\a,\b,\g} + \|f- g\|_{\a,\b,\g}$, it is easy to see that \eqref{eq:der-bound2} 
implies \eqref{eq:K-equiv}.
\end{proof}

For $r =2$, the above theorem has been established in \cite{DHW} for $1 < p< \infty$ and, more generally, 
for the $d$-dimensional simplex.

\begin{rem}
Although the inverse estimate \eqref{eq:Bernstein} follows from the inverse estimate that holds for 
$K_r(f;t)_{L^2(\varpi_{\a,\b,\g})}$, because of \eqref{eq:K-equiv}, the direct estimate \eqref{eq:Jackson} cannot 
be deduced from the direct estimate for $K_r(f;t)_{L^2(\varpi_{\a,\b,\g})}$ because of the extra term 
$t^r \|f\|_{\a,\b,\g}$ in \eqref{eq:K-equiv}. Our proof indicates that the term $t^r \|f\|_{\a,\b,\g}$ in \eqref{eq:K-equiv} 
is necessary. 
\end{rem}
 
The standard proof of the inverse estimate \eqref{eq:Bernstein} (see, for example, \cite{DG}) shows that the 
estimate follows as a consequence of the Bernstein inequality. In our case, the proof follows from the 
inequalities in the following theorem. 
\begin{thm}
Let $r \in \NN$ and $\a,\b,\g > -1$. Then, for every $P_n \in \Pi_n^2$, we have
\begin{equation*}
   \left \| \phi_i^r \partial_i^r P_n \right \|_{\a,\b,\g} \le c\, n^{r} \|P_n\|_{\a,\b,\g}, \qquad 1 \le i \le 3,
\end{equation*}
where $c$ is a constant independent of $n$. 
\end{thm}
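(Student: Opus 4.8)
The plan is to reduce the claim to the comparison inequality \eqref{eq:der-bound}, already established above, together with an elementary spectral Bernstein inequality for the operator $(-\CD_{\a,\b,\g})^{r/2}$. Since $P_n$ is a polynomial, $\partial_i^r P_n$ is again a polynomial and $\phi_i^r$ is bounded on $\triangle$, so $P_n \in W_2^r(\varpi_{\a,\b,\g})$ and every norm below is finite; applying \eqref{eq:der-bound} with $g = P_n$ gives $\|\phi_i^r \partial_i^r P_n\|_{\a,\b,\g} \le c\,\|(-\CD_{\a,\b,\g})^{r/2} P_n\|_{\a,\b,\g}$ for $i = 1,2,3$, so it suffices to prove
\[
  \bigl\|(-\CD_{\a,\b,\g})^{r/2} P_n\bigr\|_{\a,\b,\g} \le c\, n^r \|P_n\|_{\a,\b,\g}.
\]

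For this, write $P_n = \sum_{m=0}^n \sum_{k=0}^m \wh{(P_n)}_{k,m}^{\a,\b,\g} J_{k,m}^{\a,\b,\g}$, where only modes with $m \le n$ appear. Using the definition of $(-\CD_{\a,\b,\g})^{r/2}$ via its orthogonal expansion (see \eqref{eq:eigen}) and Parseval's identity,
\[
  \bigl\|(-\CD_{\a,\b,\g})^{r/2} P_n\bigr\|_{\a,\b,\g}^2 = \sum_{m=1}^n |\l_m|^r \sum_{k=0}^m \bigl| \wh{(P_n)}_{k,m}^{\a,\b,\g}\bigr|^2 h_{k,m}^{\a,\b,\g},
\]
with $\l_m = -m(m+\a+\b+\g+2)$. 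Since $\a,\b,\g > -1$ forces $\a+\b+\g+2 > -1$, the sequence $|\l_m| = m(m+\a+\b+\g+2)$ is increasing for $m \ge 1$, so $|\l_m|^r \le |\l_n|^r$ for $1 \le m \le n$ and $|\l_n|^r \sim n^{2r}$. Pulling this factor out and applying Parseval once more to the resulting sum bounds the right--hand side by $c\, n^{2r} \|P_n\|_{\a,\b,\g}^2$, which finishes the proof.

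Along this route there is essentially no obstacle beyond the two elementary facts about $|\l_m|$; the real content has already been absorbed into the proof of \eqref{eq:der-bound}. If one prefers an argument not relying on \eqref{eq:der-bound}, the natural alternative repeats the $i=1,2$ computation from the proof of \eqref{eq:K-equiv}: expand $\phi_i^r \partial_i^r P_n$ in the basis $J_{k,m}^{\a+r,\b,\g+r}$ (resp.\ $J_{k,m}^{\a,\b+r,\g+r}$), apply the first two identities of \eqref{eq:wh-f}, and use the bounds $|A_{r,j,k,m}^{\a,\b}| \sim (m/(k+1))^j$ together with the $h$-ratio estimates obtained by iterating \eqref{eq:i=1}, plus Cauchy--Schwarz on the finite sum over $j$, to get $\|\phi_i^r \partial_i^r P_n\|_{\a,\b,\g}^2 \lesssim \sum_{m=r}^n m^{2r} \sum_{k=0}^m |\wh{(P_n)}_{k,m}^{\a,\b,\g}|^2 h_{k,m}^{\a,\b,\g} \le n^{2r}\|P_n\|_{\a,\b,\g}^2$; the case $i=3$ is simpler, following from $\partial_3^r J_{k,m}^{\a,\b,\g} = J_{k-r,m-r}^{\a+r,\b+r,\g}$ and the comparison $h_{k-r,m-r}^{\a+r,\b+r,\g} \sim k^{2r} h_{k,m}^{\a,\b,\g}$. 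The only work in that alternative is the bookkeeping of $h$-ratio estimates, all of which already occur earlier in the paper.
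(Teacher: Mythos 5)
Your proposal is correct and is essentially the argument the paper intends: the paper's one-line proof ("follow the proof of \eqref{eq:der-bound}") amounts to exactly your observation that the chain of estimates there bounds $\|\phi_i^r\partial_i^r g\|_{\a,\b,\g}$ by $\|(-\CD_{\a,\b,\g})^{r/2}g\|_{\a,\b,\g}$, and that for $g=P_n$ the spectral sum is supported on $m\le n$ with $|\l_m|^r\le |\l_n|^r\sim n^{2r}$. Both your primary route (via \eqref{eq:der-bound} as a black box) and your alternative (rerunning the coefficient estimates on the truncated sum) are sound and coincide with the paper's approach.
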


It is easy to see that these inequalities can be proved by following the proof of 
\eqref{eq:der-bound}.

\section{A family of Determinants}
\label{sec:4}
In this section we prove a closed form formula for a family of determinants, which includes the 
determinant that we need in Lemma~\ref{lem:2.2} in a special case. 

\begin{thm} \label{thm:1}
Define
$$
f(s_1, s_2, r, i, j) := 
 \binom r{ j - i}\frac { (s_1 + i)_{ j - i}}
{(s_1 + s_2 + i + j - 1)_{ j - i}\,
   (s_1 + s_2 + r + 2 i)_{ j - i}}
$$
and
$$
M(r_1,r_2):=\begin{pmatrix} 
 f(s_1, s_2, r_1, i, j)&\text{for }0\le i<r_2\\
 (-1)^{j - i - r_2} f(s_2, s_1, r_2, i - r_2, j)
&\text{for }r_2\le i<r_1+r_2-1
\end{pmatrix}_{0\le i,j\le r_1+r_2-1}.
$$
Then the determinant of $M(r_1,r_2)$ equals
\begin{equation} \label{eq:1}
(-1)^{r_1 r_2} 
\prod _{j=1} ^{r_1}
\frac {1} {   (s_1 + s_2 + r_1 + r_2 + j - 2)_{r_2}}.
\end{equation}
\end{thm}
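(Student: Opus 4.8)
The plan is to prove the determinant evaluation in Theorem~\ref{thm:1} by the method of \emph{identification of factors}, which is the standard approach for determinants whose entries are products and quotients of Pochhammer symbols and which evaluate as a product of such symbols (see Krattenthaler's determinant survey). Concretely, I would view the two shift parameters $s_1$ and $s_2$ as indeterminates and regard $\det M(r_1,r_2)$, after clearing denominators, as a polynomial in $s_1$ (say). The first step is bookkeeping: multiply row $i$ and column $j$ of $M(r_1,r_2)$ by suitable products of Pochhammer symbols so that every entry becomes a polynomial in $s_1,s_2$; keep track of the total monomial factor pulled out, since the claimed answer \eqref{eq:1} will have to match this after the polynomial part is identified. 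One then shows (i) that the polynomial part of the determinant is divisible by each of the linear factors implicit in the product $\prod_{j=1}^{r_1}1/(s_1+s_2+r_1+r_2+j-2)_{r_2}$ — i.e.\ that each such $s_1+s_2+\text{(constant)}$ divides the (cleared) determinant — and (ii) that the degree of the polynomial part in $s_1$ matches the number of these factors, so that the determinant equals the claimed product up to a constant independent of $s_1,s_2$; (iii) finally, one pins down that constant (including the sign $(-1)^{r_1r_2}$) by a single specialization, e.g.\ sending $s_1\to\infty$ or evaluating at a convenient integer value where the matrix becomes triangular or block-triangular.

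For step (i), the divisibility, the mechanism is the usual one: when $s_1+s_2$ is specialized to the value making a given Pochhammer factor in a denominator vanish, the corresponding rows (the $f(s_1,s_2,r_1,\cdot,\cdot)$ block and the $f(s_2,s_1,r_2,\cdot,\cdot)$ block respond differently to this specialization) become linearly dependent — one exhibits an explicit linear combination of rows that vanishes, with coefficients that are ratios of binomial/Pochhammer expressions. Because the entries $f(s_1,s_2,r,i,j)$ are, up to an $i,j$-dependent normalization, Jacobi-type connection coefficients (indeed $f$ is exactly the $n$-free part $A^{\a,\b}_{r,j-i,k+i}$ of the recursion coefficients from the Lemma preceding Lemma~\ref{lem:2.2}), these linear relations are instances of the three-term structure in \eqref{eq:diffJ}: at the special value of $s_1+s_2$, the two families of "derivative images" collapse onto a common lower-degree family, forcing a dependency among the $2r$ (or $r_1+r_2$) rows. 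Making this dependency explicit — writing down the annihilating vector and verifying the contiguous-relation identity it encodes — is what I expect to be the \textbf{main obstacle}: it requires a clean hypergeometric identity (a terminating ${}_3F_2$ or Chu--Vandermonde-type summation) valid for all $r_1,r_2$, and getting the indexing and the alternating signs $(-1)^{j-i-r_2}$ to cooperate is delicate.

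For step (ii), the degree count, I would argue that after clearing denominators the determinant is a polynomial in $s_1$ of degree at most (sum over entries in a transversal of the degrees), and that this bound equals $r_1r_2$, exactly the number of linear factors $s_1+s_2+r_1+r_2+j-2,\ \dots,\ s_1+s_2+r_1+2r_2+j-3$ appearing with multiplicity in $\prod_{j=1}^{r_1}(s_1+s_2+r_1+r_2+j-2)_{r_2}$ (there are $r_1$ values of $j$, each contributing $r_2$ factors, so $r_1 r_2$ in all). A small amount of care is needed because some of these $r_1 r_2$ linear forms coincide, so one must track multiplicities and verify the higher-order vanishing (a vanishing of order $\ge 2$ is obtained by differentiating the row relation in $s_1+s_2$, or equivalently by exhibiting a two-parameter family of dependencies). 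Alternatively, and perhaps more cleanly, I would perform row/column operations to bring $M(r_1,r_2)$ into a form where it factors as a product of two explicitly triangular (or "staircase") matrices — the kind of LU-type factorization hinted at by the $L_r(k,n)M_r(k,n)R_r(k,n)$ manipulation in the proof of Lemma~\ref{lem:2.2} — reducing the whole evaluation to multiplying out diagonal entries; this would simultaneously handle (ii) and (iii).

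For step (iii), once the determinant is known to equal $C(r_1,r_2)\prod_{j=1}^{r_1}(s_1+s_2+r_1+r_2+j-2)_{r_2}^{-1}$ with $C$ independent of $s_1,s_2$, I evaluate the limit $s_1\to\infty$ (after suitable normalization): in this limit $f(s_1,s_2,r_1,i,j)\to \binom{r_1}{j-i}(s_1)^{(j-i)}/(s_1)^{2(j-i)}\to 0$ for $j>i$ and $\to 1$ for $j=i$ in the leading order, so the first block becomes lower-triangular with $1$'s on the diagonal; the second block $(-1)^{j-i-r_2}f(s_2,s_1,r_2,i-r_2,j)$ is handled by tracking the $s_1\to\infty$ behavior of $f(s_2,s_1,r_2,\cdot,\cdot)$, which tends to $\binom{r_2}{j-i+r_2}$ times an explicit sign, giving an antidiagonal-type block whose determinant supplies the $(-1)^{r_1r_2}$. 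Matching leading coefficients then forces $C(r_1,r_2)=(-1)^{r_1r_2}$, completing the proof. As a final sanity check I would verify the $r_1=r_2=r$, $s_1=k+\alpha+1$, $s_2=k+\beta+1$ specialization against the right-hand side of \eqref{eq:detMr} as claimed in Lemma~\ref{lem:2.2}.
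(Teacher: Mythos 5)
Your proposal is a plan rather than a proof, and the plan has both a structural flaw and a gap at its central step. The structural flaw: the linear forms $s_1+s_2+r_1+r_2+j-2+\ell$ appearing in \eqref{eq:1} sit in the \emph{denominator} of the answer, i.e., they are poles of $\det M(r_1,r_2)$, not zeros. So your step (i) --- showing that each such $s_1+s_2+\mathrm{const}$ ``divides the cleared determinant'' --- proves the wrong thing; combined with your degree count in step (ii) (degree $r_1r_2$ matching the number of such factors) it would yield $\det M=C\prod_{j=1}^{r_1}(s_1+s_2+r_1+r_2+j-2)_{r_2}$ rather than $C\prod(\cdots)^{-1}$. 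What actually has to be established is that almost all of the poles introduced by the entry denominators $(s_1+s_2+i+j-1)_{j-i}\,(s_1+s_2+r+2i)_{j-i}$ \emph{cancel}, i.e., that the residue of the determinant at each spurious pole vanishes; that is a different (and here harder) divisibility statement, with high multiplicities since many of these linear forms coincide, and your bookkeeping does not set it up. Step (iii) is also more delicate than described: as $s_1\to\infty$ the first-block row $i$ tends to the unit vector $e_i$ and the second-block row $i$ tends to $e_{i-r_2}$, so every column of index at least $\max(r_1,r_2)$ vanishes and several rows coincide in the limit; the limiting matrix is singular (consistently with the answer decaying like $s_1^{-r_1r_2}$), so the constant can only be extracted from subleading terms, not from a triangular or antidiagonal limit shape.

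The genuine gap is the one you yourself flag as the ``main obstacle'': the explicit annihilating row vectors at the special values of $s_1+s_2$, with the correct multiplicities. Nothing in the proposal supplies them, and there is reason to doubt that a single Chu--Vandermonde or terminating ${}_3F_2$ relation suffices. The paper's proof proceeds quite differently: a Laplace expansion of $\det M$ along the first $r_2$ rows as in \eqref{eq:2}, a closed-form evaluation of \emph{every} minor occurring there (Lemma~\ref{lem:1}, itself an application of a standard Vandermonde-type determinant lemma from the ``Advanced determinant calculus''), and then the summation of the resulting multiple sum over $0\le k_0<\dots<k_{r_2-1}\le r_1+r_2-1$ by Lemma~\ref{lem:2}, which is a degenerate case of a multivariate elliptic ${}_{10}V_9$ summation conjectured by Warnaar and proved by Rosengren, Rains, and Coskun--Gustafson. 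That a summation of this depth is what closes the argument strongly suggests that the row dependencies you would need are not elementary contiguous relations. Until you exhibit them explicitly (and repair the numerator/denominator inversion in steps (i)--(ii)), the proposal does not constitute a proof.
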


\begin{proof}
The proof is based on the two lemmas given below.
If we write $M$ for $M(r_1,r_2)$ for short, the idea is to do a Laplace
expansion of $\det M$ with respect to the first $r_2$ rows,
\begin{equation} \label{eq:2}
\det M=
\sum _{0\le k_0<\dots <k_{r_2-1}\le r_1+r_2-1}
(-1)^{\binom {r_2}2+\sum_{i=0}^{r_2-1}k_i}
\det M^{k_0,\dots,k_{r_2-1}}_{0,\dots,r_2-1}
\cdot
\det M^{l_0,\dots,l_{r_1-1}}_{r_2,\dots,r_1+r_2-1},
\end{equation}
where $M^{a_1,\dots,a_r}_{b_1,\dots,b_r}$ denotes the submatrix
of $M$ consisting of rows $a_1,\dots,a_r$ and columns $b_1,\dots,b_r$,
and $\{l_0,\dots,l_{r_1-1}\}$ is the complement of
$\{k_0,\dots,k_{r_2-1}\}$ in $\{1,2,\dots, \break r_1+r_2-1\}$.
It turns out that both determinants on the right-hand side of \eqref{eq:2}
can be evaluated by means of Lemma~\ref{lem:1}. Thus, we obtain a multiple
sum for $\det M(r_1,r_2)$. This sum can then be evaluated by
Lemma~\ref{lem:2} and (considerable) simplification leads to the claimed
result on the right-hand side of \eqref{eq:1}.

So, if we use Lemma~\ref{lem:1} on the right-hand side of
\eqref{eq:2}, we obtain
\begin{multline} \label{eq:3}
\sum _{0\le k_0<\dots <k_{r_2-1}\le r_1+r_2-1}
(-1)^{\binom {r_2}2+\sum_{i=0}^{r_2-1}k_i}
\prod _{0\le i<j\le r_2-1} ^{}
(k_j - k_i) (k_i + k_j + s_1+s_2 - 1)
\\
\cdot
\prod _{i=0} ^{r_2-1}\left(
\frac {
(s_1)_{k_i}\,(s_1+s_2+r_1+2i-2)!\,(s_1+s_2+r_1+2i-1)!} 
{(s_1)_i\,(s_1+s_2 + r_1 + i - 2)!\,
(s_1+s_2+2k_i-2)!\,k_i!}\right.\\
\left.
\cdot
\frac {(r_1+i)!\,(s_1+s_2+k_i-2)!} 
{(r_1+r_2-k_i-1)!\,(s_1+s_2+r_1+r_2+k_i-2)!}
\right)\\
\cdot
(-1)^{\binom {r_1}2+\sum_{i=0}^{r_1-1}l_i}
\prod _{0\le i<j\le r_1-1} ^{}
(l_j - l_i) (l_i + l_j + s_1+s_2 - 1)
\kern2cm\\
\cdot
\prod _{i=0} ^{r_1-1}\left(
\frac {
(s_2)_{l_i}\,(s_1+s_2+r_2+2i-2)!\,(s_1+s_2+r_2+2i-1)!} 
{(s_2)_i\,(s_1+s_2 + r_2 + i - 2)!\,
(s_1+s_2+2l_i-2)!\,l_i!}\right.\\
\left.
\cdot
\frac {(r_2+i)!\,(s_1+s_2+l_i-2)!} 
{(r_1+r_2-l_i-1)!\,(s_1+s_2+r_1+r_2+l_i-2)!}
\right),
\end{multline}
where the $l_i$'s have the same meaning as before.
Clearly, we have
$$
\sum_{i=0}^{r_1-1}l_i=\binom {r_1+r_2}2-\sum_{i=0}^{r_2-1}k_i
$$
and
$$
\binom {r_1+r_2}2-\binom {r_1}2-\binom {r_2}2=r_1r_2.
$$
Furthermore, we use the inclusion/exclusion formulae
\begin{align*} 
\prod _{0\le i<j\le r_1-1} ^{}
(l_j - l_i) 
&=
\frac {\prod _{0\le i<j\le r_1+r_2-1} ^{}(j - i) 
\prod _{0\le i<j\le r_2-1} ^{} (k_j - k_i)
} 
{
\prod _{j=0} ^{r_2-1}
\prod _{i=0} ^{k_j-1}
(k_j-i)
\prod _{i=0} ^{r_2-1}
\prod _{j=k_i+1} ^{r_1+r_2-1}
(j-k_i)
}\\
&=\frac {
\prod _{i=0} ^{r_1+r_2-1}i!
\prod _{0\le i<j\le r_2-1} ^{} (k_j - k_i)
} 
{
\prod _{j=0} ^{r_2-1}k_j!
\prod _{i=0} ^{r_2-1}(r_1+r_2-k_i-1)!
}
\end{align*}
and
\begin{align*} 
\prod _{0\le i<j\le r_1-1} ^{}&
 (l_i + l_j + s_1+s_2 - 1)\\
&=
\frac {\prod _{0\le i<j\le r_1+r_2-1} ^{}(i+j+s_1+s_2-1) 
\prod _{0\le i<j\le r_2-1} ^{}
 (k_i + k_j + s_1+s_2 - 1)} 
{
\prod _{j=0} ^{r_2-1}
\prod _{i=0} ^{k_j-1}
(i+k_j+s_1+s_2-1)
\prod _{i=0} ^{r_2-1}
\prod _{j=k_i+1} ^{r_1+r_2-1}
(k_i+j+s_1+s_2-1)
}\\
&=
\frac {
\prod _{j=0} ^{r_1+r_2-1}(j+s_1+s_2-1)_{j}
\prod _{0\le i<j\le r_2-1} ^{}
 (k_i + k_j + s_1+s_2 - 1)
} 
{
\prod _{j=0} ^{r_2-1}
(k_j+s_1+s_2-1)_{k_j}
\prod _{i=0} ^{r_2-1}
(2k_i+s_1+s_2)_{r_1+r_2-k_i-1}
}.
\end{align*}
If we substitute all this in \eqref{eq:3}, then, upon further manipulation,
we arrive at
\begin{multline*} 
(-1)^{r_1r_2}
\prod _{i=0} ^{r_1+r_2-1}
\frac {(s_2)_i\,(s_1+s_2+i-2)!\,(i+s_1+s_2-1)_{i}} 
{(s_1+s_2+2i-2)!\,(r_1+r_2-i-1)!\,(s_1+s_2+r_1+r_2+i-2)!}\\
\times
\prod _{i=0} ^{r_2-1}
\frac {
(s_1+s_2+r_1+2i-2)!\,(s_1+s_2+r_1+2i-1)!\,(r_1+i)!} 
{(s_1)_i\,(s_1+s_2 + r_1 + i - 2)!\,(r_1+r_2-1)!\,(s_1+s_2)_{r_1+r_2-1}
}\\
\times
\prod _{i=0} ^{r_1-1}
\frac {
(s_1+s_2+r_2+2i-2)!\,(s_1+s_2+r_2+2i-1)!\,(r_2+i)!} 
{(s_2)_i\,(s_1+s_2 + r_2 + i - 2)!}\\
\times
\sum _{0\le k_0<\dots <k_{r_2-1}\le r_1+r_2-1}
(-1)^{\sum_{i=0}^{r_2-1}k_i}
\prod _{0\le i<j\le r_2-1} ^{}
(k_j - k_i)^2 (k_i + k_j + s_1+s_2 - 1)^2\\
\cdot
\prod _{i=0} ^{r_2-1}
\frac {(s_1+s_2-1+2k_i)} {(s_1+s_2-1)}\cdot
\frac {(s_1+s_2-1)_{k_i}\,(s_1)_{k_i}\,
(-r_1-r_2+1)_{k_i}
} 
{k_i!\,(s_2)_{k_i}\,
(s_1+s_2+r_1+r_2-1)_{k_i}}.
\end{multline*}
Now we apply Lemma~\ref{lem:2} with $r=r_2$, $a=s_1+s_2-1$, $b=s_1$,
and $m=r_1+r_2-1$. The result then finally condenses to the
right-hand side of~\eqref{eq:1}.
\end{proof}

\begin{lem} \label{lem:1}
For any positive integer $r_2$, we have
\begin{multline*} 
\det_{0\le i,j\le r_2-1}\left(
 \binom {r_1}{ k_j - i}\frac { (s_1 + i)_{ k_j - i}}
{(s_1 + s_2 + i + k_j - 1)_{ k_j - i}\,
   (s_1 + s_2 + r_1 + 2 i)_{ k_j - i}}
\right)\\
=
\prod _{0\le i<j\le r_2-1} ^{}
(k_j - k_i) (k_i + k_j + s_1+s_2 - 1)
\kern4cm\\
\times
\prod _{i=0} ^{r_2-1}\left(
\frac {
(s_1)_{k_i}\,(s_1+s_2+r_1+2i-2)!\,(s_1+s_2+r_1+2i-1)!} 
{(s_1)_i\,(s_1+s_2 + r_1 + i - 2)!\,
(s_1+s_2+2k_i-2)!\,k_i!}\right.\\
\left.
\cdot
\frac {(r_1+i)!\,(s_1+s_2+k_i-2)!} 
{(r_1+r_2-k_i-1)!\,(s_1+s_2+r_1+r_2+k_i-2)!}
\right).
\end{multline*}
\end{lem}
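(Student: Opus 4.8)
The plan is to evaluate the determinant by the ``identification of factors'' technique. Write $M_{ij}$ for the $(i,j)$-entry of the matrix on the left-hand side, viewed as $M_i(k_j)$, a function of the single variable $k_j$. First I would rewrite every Pochhammer symbol and factorial as a quotient of Gamma functions, which produces a factorization $M_i(u)=P(u)\,g_i(u)$, where
$$
P(u)=\frac{\Gamma(s_1+u)\,\Gamma(s_1+s_2+u-1)}
{\Gamma(s_1)\,\Gamma(s_1+s_2+2u-1)\,\Gamma(u+1)\,\Gamma(r_1+r_2-u)\,\Gamma(s_1+s_2+r_1+r_2+u-1)}
$$
is exactly the part of the product on the right-hand side of the lemma that depends on $k_0,\dots,k_{r_2-1}$, and
$$
g_i(u)=\mathrm{const}_i\cdot
\frac{\Gamma(u+1)}{\Gamma(u-i+1)}\cdot
\frac{\Gamma(r_1+r_2-u)}{\Gamma(r_1-u+i+1)}\cdot
\frac{\Gamma(s_1+s_2+u+i-1)\,\Gamma(s_1+s_2+r_1+r_2+u-1)}{\Gamma(s_1+s_2+r_1+u+i)\,\Gamma(s_1+s_2+u-1)}.
$$
Because $0\le i\le r_2-1$, each of the three Gamma quotients in $g_i$ is a genuine polynomial in $u$, of degrees $i$, $r_2-1-i$ and $r_2-1$, respectively, so $g_i$ is a polynomial in $u$ of degree $2(r_2-1)$.

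The key point, which I would establish next, is that $g_i$ is in fact a polynomial of degree $\le r_2-1$ in the quadratic $X_u:=u(u+s_1+s_2-1)$. The involution $u\mapsto 1-s_1-s_2-u$ fixes $X_u$; substituting it into the formula for $g_i$ and matching the three Gamma quotients against one another — the two resulting signs $(-1)^i$ canceling — shows that $g_i$ is invariant under this involution, and since $g_i$ has even degree in $u$ it is therefore a polynomial of degree $r_2-1$ in $X_u$. (The factor $\prod_{i<j}(k_j-k_i)(k_i+k_j+s_1+s_2-1)=\prod_{i<j}(X_{k_j}-X_{k_i})$ on the right-hand side of the lemma is the fingerprint of a Vandermonde determinant in the $X_{k_j}$, which is what flags this change of variable.)

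With this in hand, the left-hand side equals $\big(\prod_j P(k_j)\big)\det\big(g_i(k_j)\big)_{0\le i,j\le r_2-1}$, and $\det\big(g_i(k_j)\big)$ is a polynomial in $X_{k_0},\dots,X_{k_{r_2-1}}$ that is antisymmetric and of degree $\le r_2-1$ in each of them; by the standard fact that an antisymmetric polynomial of such low degree must be a constant multiple of the Vandermonde product, it equals $\kappa\prod_{0\le i<j\le r_2-1}(X_{k_j}-X_{k_i})$ for a constant $\kappa=\kappa(s_1,s_2,r_1,r_2)$. To pin down $\kappa$ I would specialize $k_j=j$: then $M_{ij}=\binom{r_1}{j-i}(\cdots)$ vanishes for $j<i$ and equals $1$ for $j=i$, so the matrix is upper triangular with unit diagonal and has determinant $1$, whence $\kappa=\big(\prod_{j}P(j)\prod_{i<j}(j-i)(i+j+s_1+s_2-1)\big)^{-1}$. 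Finally I would check that this equals the $k$-free part $\prod_i Q(i)$ of the right-hand side of the lemma; after the evident cancellations this reduces to
$$
\prod_{j=0}^{r_2-1}\frac{(s_1+s_2+r_1+2j-2)!\,(s_1+s_2+r_1+2j-1)!\,(r_1+j)!}{(s_1+s_2+r_1+j-2)!\,(r_1+r_2-j-1)!\,(s_1+s_2+r_1+r_2+j-2)!}=1,
$$
which splits into two independent telescopings: the $(r_1+j)!$ in the numerator run over the same set of factorials as the $(r_1+r_2-j-1)!$ in the denominator, and the remaining numerator factorials $(s_1+s_2+r_1+2j-2)!\,(s_1+s_2+r_1+2j-1)!$ reassemble into the same product as the remaining denominator factorials $(s_1+s_2+r_1+j-2)!\,(s_1+s_2+r_1+r_2+j-2)!$. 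Assembling the pieces yields the asserted closed form.

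The Gamma-function bookkeeping and the closing telescoping identity are mechanical; the conceptual heart of the argument — and the step I expect to require the most care to state cleanly — is the upgrade from ``$g_i$ has degree $2(r_2-1)$ in $u$'' to ``$g_i$ has degree $r_2-1$ in $X_u$'' via the symmetry $u\mapsto 1-s_1-s_2-u$, since this is what brings the determinant into the shape where the Vandermonde principle applies.
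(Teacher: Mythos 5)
Your proposal is correct and follows essentially the same route as the paper: both pull the $k_j$-dependent Gamma-factors out of the columns (and constants out of the rows) so that a polynomial determinant remains, and both recognize that determinant as a Vandermonde determinant in the quadratic variable $X_u=u(u+s_1+s_2-1)$, which is exactly what produces the factor $\prod_{0\le i<j\le r_2-1}(k_j-k_i)(k_i+k_j+s_1+s_2-1)$. The only difference is that the paper evaluates the residual polynomial determinant by specializing Lemma~7 of \cite{KratBN} (whose proof is the same symmetry-plus-degree-count argument you give), whereas you prove it inline, fixing the multiplicative constant via the upper-triangular specialization $k_j=j$; your version is self-contained but identical in substance.
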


\begin{proof}
We take as many factors out of rows or columns such that inside
the determinant there remains a polynomial. More precisely, 
our determinant equals
\begin{multline*} 
\prod _{i=0} ^{r_2-1}
\frac {
(s_1)_{k_i}\,(s_1+s_2+r_1+2i-1)!\,r_1!\,(s_1+s_2+k_i-2)!} 
{(s_1)_i\,
(s_1+s_2+2k_i-2)!\,k_i!\,(r_1+r_2-k_i-1)!\,(s_1+s_2+r_1+r_2+k_i-2)!}\\
\times
\det_{0\le i,j\le r_2-1}\left(
(r_1 - k_j + i + 1)_{ r_2 - i - 1} \,
(s_1+s_2 + r_1 + i + k_j)_{ r_2 - i - 1}\right.\\
\left.
\cdot
(k_j - i + 1)_{ i}\, 
 ( s_1+s_2 + k_j - 1)_{ i}
\right).
\end{multline*}
We claim that the determinant in the last line equals
$$
\prod _{0\le i<j\le r_2-1} ^{}
(k_j - k_i) (k_i + k_j + S - 1)
\prod _{i=0} ^{r_2-1}
   (r_1 + 1)_{ i }\, (s_1+s_2 + r_1 + i - 1)_{ i }.
$$
This is seen by specializing $n=r_2$, $X_j=k_j$, $A_i=-(r_1+i-1)$, 
$C=-(s_1+s_2-1)$, and
$$
p_i(X)=(X-i+1)_i\,(s_1+s_2+X-1)_i
$$
in \cite[Lemma~7]{KratBN}.
\end{proof}

\begin{lem} \label{lem:2}
For all non-negative integers $m$, we have
\begin{multline*} 
\sum _{0\le k_1<\dots <k_r\le m} ^{}
(-1)^{\sum _{i=1} ^{r}k_i}
\prod _{1\le i<j\le r} ^{}(k_j-k_i)^2\,(k_i+k_j+a)^2\\
\cdot
\prod _{i=1} ^{r}\frac {(a+2k_i)} {a}\cdot
\frac {(a)_{k_i}\,(b)_{k_i}\,(-m)_{k_i}} 
{k_i!\,(a+1-b)_{k_i}\,(a+1+m)_{k_i}}\\
=
\prod _{i=1} ^{r}\frac {(i-1)!\,(b)_{i-1}\,m!\,(a+1)_m} 
{(m+1-i)!\,(a+1-b)_{m+1-i}}.
\end{multline*}
\end{lem}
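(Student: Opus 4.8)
The plan is to read the left--hand side of Lemma~\ref{lem:2} as a Hankel determinant of ``moments'' on the quadratic lattice $\lambda(k):=k(k+a)$, and then to evaluate that determinant through the squared norms of the associated monic orthogonal polynomials, which turn out to be dual Hahn polynomials under an appropriate parameter identification. \emph{Step 1: reduction to a Hankel determinant.} Since $(k_j-k_i)(k_i+k_j+a)=\lambda(k_j)-\lambda(k_i)$, the Vandermonde--type product equals $\big(\det_{1\le i,j\le r}(\lambda(k_j)^{i-1})\big)^2$. Absorbing the sign $(-1)^{\sum k_i}$ into the one--variable weight
\[
  \tilde w(k):=(-1)^k\,\frac{a+2k}{a}\cdot\frac{(a)_k(b)_k(-m)_k}{k!\,(a+1-b)_k\,(a+1+m)_k},
\]
the Cauchy--Binet formula applied to $P\,\mathrm{diag}(\tilde w(k))\,P^{T}$, with $P=\big(\lambda(k)^{i-1}\big)_{1\le i\le r,\ 0\le k\le m}$, yields
\[
  \text{LHS of Lemma~\ref{lem:2}}=\det_{1\le i,j\le r}\big(\mu_{i+j-2}\big),
  \qquad \mu_\ell:=\sum_{k=0}^{m}\tilde w(k)\,\lambda(k)^{\ell}.
\]

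\emph{Step 2: orthogonal polynomials.} Row and column operations allow one to replace the monomials $\lambda^{i-1}$ in this Hankel determinant by arbitrary monic polynomials of degree $i-1$; choosing these to be the monic polynomials $p_n$ orthogonal on $\{0,1,\dots,m\}$ with respect to $\tilde w$ and the nodes $\lambda(k)$ diagonalizes the matrix, so that
\[
  \text{LHS of Lemma~\ref{lem:2}}=\prod_{n=0}^{r-1}h_n,
  \qquad h_n:=\sum_{k=0}^{m}\tilde w(k)\,p_n(\lambda(k))^2.
\]
Both sides of Lemma~\ref{lem:2} are rational in $a$ and $b$ for fixed $m$ and $r$, so it is enough to argue for generic parameters, where the relevant leading Hankel minors are nonzero and the $p_n$ exist; this covers $r\le m+1$, and for $r>m+1$ both sides vanish.

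\emph{Step 3: the norms.} The base norm $h_0=\sum_{k=0}^{m}\tilde w(k)$ is a terminating, very--well--poised ${}_4F_3$ at argument $-1$; it sums --- for instance as the limit $c\to\infty$ of Dougall's ${}_5F_4$ summation --- to $(a+1)_m/(a+1-b)_m$. The consecutive ratio $h_n/h_{n-1}$ is the subdiagonal coefficient of the three--term recurrence of the monic dual Hahn polynomials, which has the closed product form $h_n/h_{n-1}=n\,(n+b-1)\,(m-n+1)\,(m-n+a+1-b)$; telescoping from $h_0$ gives
\[
  h_n=\frac{n!\,(b)_n\,m!\,(a+1)_m}{(m-n)!\,(a+1-b)_{m-n}},
\]
and the product of the $h_n$ over $0\le n\le r-1$ (with $n=i-1$) is precisely the right--hand side of Lemma~\ref{lem:2}.

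\emph{Where the work lies.} Steps~1 and~2 are routine linear algebra together with the standard fact that a Hankel determinant of moments equals the product of the squared norms of the corresponding monic orthogonal polynomials. The substance is Step~3: one has to sum the single very--well--poised series at the nonstandard argument $-1$ while keeping careful track of the $(-1)^k$ twist relating $\tilde w$ to the classical dual Hahn weight, determine the recurrence coefficient $h_n/h_{n-1}$ (or, alternatively, quote the monic dual Hahn norm from the literature and reconcile normalizations), and check that the resulting telescoping product collapses to the claimed rational function.
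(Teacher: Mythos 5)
Your proposal is correct, and it takes a genuinely different route from the paper. The paper disposes of Lemma~\ref{lem:2} by quoting a multivariable elliptic ${}_{10}V_9$ summation conjectured by Warnaar and proved by Rosengren (and Rains, Coskun--Gustafson), and then performing a chain of degenerations ($p=0$, $c,d\to\infty$, $q\to1$); the lemma is obtained as a very special classical limit of that deep identity. You instead give a self-contained ``Heine/Hankel'' argument: since $(k_j-k_i)(k_i+k_j+a)=\lambda(k_j)-\lambda(k_i)$ with $\lambda(k)=k(k+a)$, Cauchy--Binet turns the sum into the Hankel determinant $\det(\mu_{i+j-2})$ of moments of a one-variable weight on the nodes $\lambda(0),\dots,\lambda(m)$, which equals $\prod_{n=0}^{r-1}h_n$ for the associated monic orthogonal polynomials. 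One simplification worth making explicit: the sign $(-1)^k$ is not really a ``twist'' to track, because $(-1)^k(-m)_k=m!/(m-k)!$ and $(a)_k/(a+1+m)_k=(a)_{m+1}/(k+a)_{m+1}$, so your $\tilde w$ is, up to a constant, exactly the classical (positive) dual Hahn weight with $\gamma=b-1$, $\delta=a-b$, $N=m$ on the lattice $k(k+a)$; the recurrence coefficient $\beta_n=A_{n-1}C_n=n(n+b-1)(m+1-n)(m+1+a-b-n)$ then gives your ratio $h_n/h_{n-1}$ directly, and $h_0$ is the terminating very-well-poised ${}_4F_3(-1)$ summation, yielding $(a+1)_m/(a+1-b)_m$. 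I have checked that these values telescope to the stated right-hand side and that the identity holds in small cases ($r=1,2$, $m=1$), and your genericity/rationality argument (with both sides vanishing for $r>m+1$) correctly handles possible degeneracies, provided you note that the nonvanishing of the leading Hankel minors for generic parameters follows inductively from the product formula itself. The trade-off: the paper's proof is a one-line citation but imports heavy machinery and a delicate specialization; yours is longer to write out in full but elementary, uses only classical dual Hahn orthogonality, and explains the product structure of the answer.
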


\begin{proof}
This is a special case of a multi-dimensional ${}_{10}V_{9}$
summation formula conjectured by Warnaar (let $x=q$ in
\cite[Cor.~6.2]{WarnAG}), which has subsequently been
proven by Rosengren \cite{RosHAC} (and
in more generality by Rains~\cite[Theorem~4.9]{RainAA} and, 
independently,
by Coskun and Gustafson~\cite{CoGuAA}). Using the statement
of the identity in \cite[Theorem~3.1]{SchlAR}, we have to
first specialize $p=0$, then let $c,d\to\infty$, and finally
replace $a$ by $q^a$ and $b$ by $q^b$ and let $q\to1$.
\end{proof}

\end{document}